\newtheorem{theorem}{Theorem}[section]
\newtheorem{lemma}{Lemma}[section]
\newtheorem{corollary}{Corollary}[section]
\newtheorem{remark}{Remark}[section]
\newtheorem{definition}{Definition}[section]
\newtheorem{proposition}{Proposition}[section]
\begin{document}
\title[Speed of propagation of fractional dispersive waves]{Speed of propagation of fractional dispersive waves}

\author{Brian Choi}
\address{Department of Mathematics, Southern Methodist University, Dallas, TX 75275, USA}
\email{choigh@bu.edu}

\author{Steven Walton}
\address{Theoretical Division, Los Alamos National Laboratory, Los Alamos NM 87545, USA}
\email{stevenw@lanl.gov}

\thanks{The authors were supported in part by [NSF RTG grant DMS-1840260] at Southern Methodist University. The second author is with Los Alamos National Laboratory; this manuscript is approved for unlimited release (LA-UR-24-21704).}
\thanks{Corresponding author: Brian Choi}

\begin{abstract}
In this paper, we show that all non-trivial solutions of a broad class of nonlinear dispersive equations, whose linear evolution is governed by a dispersion relation under minimal regularity assumptions, cannot remain compactly supported for any non-trivial time interval. Our approach, based on complex-analytic arguments and the Paley-Wiener-Schwartz theorem, yields a stronger result: if linear solutions are compactly supported at two distinct times, then the dispersion relation must admit an analytic extension. This extends previous results beyond polynomial dispersion relations and applies to more general settings, including fractional-order systems. As an application, we examine the generalized space-time fractional Schr\"{o}dinger equation, illustrating the role of memory effects in wave propagation.
\end{abstract}

\keywords{Fractional dispersive equation, Paley-Wiener-Schwartz theorem, Infinite speed of propagation, Unique continuation\\
\textbf{MSC 2020} 35Q55, 35B60, 35S30, 26A33, 35B40, 35C15}

\maketitle

\section{Introduction}

A PDE exhibits infinite speed of propagation (ISP) when a localized initial datum with compact support instantly spreads throughout space. This phenomenon plays a crucial role in dispersive wave equations, where it relates to unique continuation principles and qualitative properties of solutions. In this paper, we extend previous results on ISP \cite{ZHANG1994290,bourgain1997compactness} beyond polynomial dispersion relations using complex-analytic techniques, particularly the Paley-Wiener-Schwartz theorem and the holomorphic Fourier transform, to analyze solution support for a broad class of nonlinear dispersive equations.

Consider the Cauchy problem
\begin{equation}\label{FSE2}
\begin{split}
i\partial_t u &= Lu, (x,t) \in \mathbb{R}^{d+1},\\
u(x,0) &= \phi(x),    
\end{split}
\end{equation}
where $L$ is a Fourier multiplier given by $\widehat{Lf}(\xi) = \omega(\xi)\widehat{f}(\xi)$; see \Cref{notation} for notations. By a one-parameter family of unitary operators $U(t) = \exp(-itL)$ and the fundamental solution $K_t = U(t)\delta$, the solution has an analytic form
\begin{equation*}
    u(x,t) = U(t)\phi (x) = K_t \ast \phi(x) = \frac{1}{(2\pi)^d} \int e^{i(x\cdot\xi -t \omega(\xi))}\widehat{\phi}(\xi)d\xi.
\end{equation*}

However, direct verification of ISP in the physical space is generally intractable unless $\omega(\xi)$ takes a particularly simple algebraic form, such as the Schr\"{o}dinger dispersion relation $\omega(\xi) = |\xi|^2$ or the Airy-type relation $\omega(\xi) = \xi^3$. Even in these cases, establishing ISP for general initial data via convolution against $K_t$ remains non-trivial. Since a wide class of dispersion relations, including many that are non-analytic, arise in physically relevant models, and such relations often lack convenient mathematical structure, a detailed analysis of their behavior poses additional challenges. Examples include the space-fractional Schr\"{o}dinger equation with $\omega(\xi) = |\xi|^\alpha$ and the nondimensionalized surface gravity wave model where $\omega(\xi)^2 = (\xi + \xi^3) \tanh \xi$. Unlike polynomial dispersion relations, which allow for explicit algebraic manipulations, non-analytic dispersions often require integral representations involving special functions, such as the Fox H-function (see \eqref{H-function}). This makes direct analysis in the physical domain significantly more challenging.

To address this, we develop a method based on the Paley-Wiener-Schwartz theorem to prove that ISP is a generic phenomenon among dispersive equations of type \eqref{FSE2} and its nonlinear generalizations under minimal regularity assumptions on $\omega(\xi)$. Unlike prior work that restricted $\omega$ to polynomials (e.g. \cite{ZHANG1994290}), our result (\Cref{thm}) extends the previous analysis to the class of continuous dispersion relations (the continuity hypothesis is sharp by \Cref{rmk1}) and shows that if the support of the flow given by $U(t)$ is compact at any two distinct times, then $\omega$ extends analytically into the complex plane. On the other hand, if $\omega$ is a polynomial of degree at least two, then a stronger conclusion is shown via the Fourier–Bros–Iagolnitzer (FBI) transform: any non-trivial solution spreads out to the entire spatial domain infinitely fast. When $U(t)$ is applied to $u(x,0) = \delta(x)$, the delta function, \Cref{thm} yields the statement (see also \Cref{rmk_liouville}):
\begin{proposition}\label{fundamental_cpt}
Let $\omega \in C(\mathbb{R}^d;\mathbb{R})$. Then $\omega(\xi) = \sum\limits_{|\boldsymbol\alpha|\leq 1}a_{\boldsymbol\alpha }\xi^{\boldsymbol\alpha}$ where $\boldsymbol\alpha \in \mathbb{N}_{0}^{d}$ and $a_{\boldsymbol\alpha } \in \mathbb{R}$ if and only if $supp(K_t)\subseteq \mathbb{R}^d$ is compact for some $t \neq 0$.
\end{proposition}

ISP holds for a certain class of \textit{nonlinear equations}. For the standard and modified KdV equations, \cite{zhang1992unique} showed that the solutions cannot be compactly supported at two distinct times by using the complete integrability of the models. In most cases, a given PDE fails to be completely integrable, however. This issue was addressed in \cite{bourgain1997compactness} by showing that the KdV-type equation cannot have a solution compactly supported for all $t \in I$ by treating the nonlinear Duhamel contribution as a small perturbation. We show that the method in \cite{bourgain1997compactness} generalizes by considering $\omega$ that satisfies a certain growth condition and the power-type nonlinearity $N[u] = u^k$ (or any k-fold product for $k \geq 2$ involving complex conjugates) such that $u(x,t)$ is a solution to
\begin{equation}\label{nonlinear_eqn}
    i\partial_t u = Lu + N[u]
\end{equation}
for $t \in I$. We give a simplified version of \Cref{thm_nonlinear}.

\begin{proposition}\label{prop_nonlinear}
    Let $\omega$ be entire on $\mathbb{C}^d$ satisfying the growth condition \eqref{growth}. Suppose $u(x,t)$ is a smooth solution to \eqref{nonlinear_eqn} whose spatial support is bounded in a ball of radius $O(1)$ in the time interval of existence. Then $u \equiv 0$, i.e., a trivial solution.
\end{proposition}

However this growth condition is not sharp as it fails to apply to the power-type nonlinear Schr\"odinger equations whose analysis in the context of unique continuation was done in \cite{kenig2003unique}; see \Cref{rmk:3.1}. For extensions of \cite{bourgain1997compactness} to specific nonlinear dispersive models with $\omega$ growing sufficiently fast, including the Zakharov-Kuznetsov and the KP-II equation, see \cite{panthee2004note,panthee2005unique,carvajal2005unique}. However, as there exist compactly supported soliton solutions (compactons) to highly nonlinear models (see \cite{rosenau1993compactons}), more remains to be studied in nonlinear regimes.

It is worthwhile to mention the method of Carleman estimates in establishing the unique continuation of solutions, though {\it we do not take this approach}. In \cite{kenig1988note}, for $d \geq 1$, if the solution $u$ of the linear Schr\"odinger equation satisfies some integrability condition and satisfies the differential inequality of the classical Schr\"odinger operator $|(i\partial_t + \Delta)u| \leq |Vu|$ for potentials $V \in L^{\frac{d+2}{2}}(\mathbb{R}^{d+1})$ then the authors show that $u$ satisfies a Carleman estimate.  Thus, they are able to show that if $u$ vanishes in a half-space of $\mathbb{R}^{d+1}$, then $u \equiv 0$. Nonlinear, local dispersive models have been considered for generalizations of the Korteweg de Vries equation, e.g. the Zakharov-Kuznetsov equation, and an alternative model, the Benjamin-Bona-Mahony equation, in \cite{kenig2002support, panthee2004note, rosier2013unique}, respectively. The generalized Kawahara equation is considered in \cite{zhang2018well}.  Extensions to nonlinear, non-local models are carried out in \cite{kenig2020unique} and \cite{10.57262/die/1356060510}.  The above brief list is by no means exhaustive and we refer the reader to the references found within each of the citations provided above.  An advantage in using Carleman estimates is that once one has shown the desired Carleman inequality holds then well-known properties of these estimates may be used to obtain unique continuation properties.  The applications of Carleman estimates are quite vast and, in addition to the works mentioned above, have been used to demonstrate controllability results (see also \cite{Lebeau2012, fursikov1996controllability} for example) for a large class of partial differential equations, usually by demonstrating that an observability inequality holds.

In addition to classical dispersive equations, ISP in time-fractional systems presents new challenges due to memory effects and anomalous diffusion. To model such effects, we consider the Caputo fractional derivative, defined as follows:
\begin{equation*}
    \partial_t^\alpha u (t) = \frac{1}{\Gamma(1-\alpha)} \int_0^t \frac{\partial_t u (\tau)}{(t-\tau)^\alpha} d\tau,
\end{equation*}
for $0 < \alpha \leq 1$. Fractional time derivatives are used to model physical phenomena with long-range correlation in time including relaxation processes. This topic has been an area of active research in recent times. For survey articles, see \cite{he2014tutorial} and references therein. Solutions to certain fractional (in time) Cauchy problems can be realized as the scaling limit of continuous time random walks; a famous non-fractional example is an appropriate scaling limit of simple random walks that gives rise to the Brownian motion. See \cite{meerschaert2009fractional} for a more thorough introduction to the topic.


Finally, we explore ISP in time-fractional dispersive systems, particularly the generalized space-time fractional Schrödinger equation,
\begin{equation}\label{stfsch}
    i^\gamma \partial_t^\alpha u = (-\Delta)^{\frac{\beta}{2}}u,\ (x,t) \in \mathbb{R}^{d+1},\ u(x,0) = \phi(x),
\end{equation}
and explore its various properties including the non-existence of compactly supported solutions, the asymptotic convergence to (almost) traveling waves, and the dispersive estimates depending on how the energy operator $i\partial_t$ is fractionalized. In \cite{laskin2018fractional}, the Laplacian was replaced by the fractional Laplacian by re-deriving the model where the action functional is integrated against the $\alpha$-stable L\'evy-like paths instead of Brownian-like paths, preserving the unitarity of time evolution. The controversy of how to fractionalize the imaginary number $i$ was raised in \cite{naber2004time,achar2013time} where the former suggested $\gamma = \alpha$ and the latter suggested $\gamma = 1$. The analysis of dispersive decay when $\gamma = 1$ was worked in \cite{su2021dispersive} where an estimate analogous to \eqref{disp_est3} was derived. Our analysis, on the other hand, provides a novel insight that the decay depends on both the memory effect (manifested by $\alpha$) and the spatial dimension, thereby extending the previous work \cite{su2021dispersive}. More specifically, the full range of dispersive estimates for $\gamma < 1$ is derived; see \Cref{disp_est} and \Cref{fig} that numerically verifies the role of $\alpha,\gamma$ in the time evolution of linear solutions. Although the physical significance of raising $i$ to a power is not discussed in our paper, see \cite{wang2007generalized,dong2008space,lee2020strichartz} and the references therein for the analyses of \eqref{stfsch}.

The paper is organized as follows. In \Cref{analytic}, we state \Cref{thm} along with the proof of analytic regularity of the dispersion relation. The key idea is two-fold. The Fourier transform of a compactly supported distribution is entire, and that a non-trivial entire function cannot decay too rapidly (\Cref{lb entire}). In \Cref{nonlinearization}, we state \Cref{thm_nonlinear}. The growth of $\omega$ is used to control the nonlinear evolution by the linear term. In \Cref{full_domain}, the support of $u(\cdot,t)$ is shown to be the entire Euclidean domain under certain hypotheses. Note that this is a stronger statement than the claim that the support is non-compact. To better understand the microlocal properties of the fundamental solution, the FBI transform is used along with the method of steepest descent to show that $K_t$ is real-analytic in the spatial variable for $t \neq 0$. In \Cref{frac_trav}, \eqref{stfsch} is studied under various fractional regimes, leading to distinct qualitative behaviors in the dispersive decay when $\alpha \neq \beta$. On the other hand, the non-dispersive case $\alpha = \beta < 1$ yields the convergence to the half-wave equation in the long time limit. 

\subsection{Notation.}\label{notation}

Denote $\mathcal{S}$ by the Schwartz class and $\mathcal{S}^\prime$, the space of tempered distributions. Denote $\mathcal{E}$ by the space of smooth functions and $\mathcal{E}^\prime$, the space of compactly supported distributions. The Fourier transform is defined as

\begin{equation*}
    \widehat{f}(\xi) = \mathcal{F}[f](\xi) = \int f(x)e^{-ix\cdot \xi}dx,\ \mathcal{F}^{-1}[f](x) = (2\pi)^{-d} \int f(\xi)e^{ix\cdot\xi}d\xi,  
\end{equation*}
and $i^\alpha := e^{i\frac{\pi \alpha}{2}}$ by setting the negative real axis as a branch cut for the complex logarithm.

Let $\psi \in C^\infty_c(\mathbb{R}^d)$ be a smooth bump function supported in $B(0,2)$ and define $\zeta(\xi) = \psi(\xi) - \psi(2\xi)$. Define $\zeta_N(\xi) = \zeta\left(\frac{\xi}{N}\right)$ and $P_N f = \mathcal{F}^{-1} [\zeta_N \widehat{f}]$. For $p,q \in [1,\infty],\ s \in \mathbb{R}$, define homogeneous and inhomogeneous Besov spaces as
\begin{equation*}
\dot{B}^s_{p,q} = \Bigl\{f \in \mathcal{S}^\prime / \mathcal{P}: \left(\sum_{N \in 2^\mathbb{Z}} N^{sq} \| P_N f \|_{L^p}^q\right)<\infty\Bigl\};\ B^s_{p,q} = \Bigl\{f \in \mathcal{S}^\prime: \left(\sum_{N \in 2^\mathbb{Z}} \langle N \rangle^{sq} \| P_N f \|_{L^p}^q\right)<\infty\Bigl\}
,   
\end{equation*}
respectively, where $\langle z \rangle = (1+|z|^2)^{\frac{1}{2}}$ and $\mathcal{P}$ is the space of polynomials on $\mathbb{R}^d$.

The Mittag-Leffler function is defined by
\begin{equation*}
    E_{\alpha,\beta}(z) = \sum_{k=0}^\infty \frac{z^k}{\Gamma(\alpha k + \beta)},
\end{equation*}
for $\alpha,\beta>0$ and $z \in \mathbb{C}$. For $t>0, \lambda \in \mathbb{C},\ 0<\alpha\leq 1$, it can be directly verified that $y(t) = E_\alpha(\lambda t^\alpha)y(0)$ satisfies
\begin{equation*}
    \partial_t^\alpha y(t) = \lambda y(t).
\end{equation*}

\section{Analytic extension of the dispersion relations}\label{analytic}

Given a function $f(x_1,\dots,x_d)$, for $x\in \mathbb R^d$ define $x^\prime_j:= (x_1, x_2, \ldots, x_{j-1}, 0, x_{j+1}, \ldots,x_d)$ and the map 
\begin{equation}\label{inclusion}
    x_j \mapsto f(x_j;x_j^\prime) := f(x_1,\dots,x_{j-1},x_j,x_{j+1},\dots,x_d)
\end{equation} where $1 \leq j \leq d$ with $x_1,\dots,x_{j-1},x_{j+1},\dots,x_d$ fixed. In the sequel, $x_{j}^{\prime}$ is considered an element of $ \mathbb R^{d-1}$. 

Denote $\xi = (\xi_1,\dots,\xi_d)$ and $z_j = \xi_j + i \eta_j$ where $\xi_j,\eta_j \in \mathbb{R}$. For an open subset $\Omega \subseteq \mathbb{C}^d$, recall that $f:\Omega\rightarrow \mathbb{C}$ is holomorphic if $f$ is holomorphic in each variable. Given $z \in \mathbb{C}\setminus \{\eta=0\}$, define a ray in $\mathbb{C}$ as
\begin{center}
$\gamma_z =
\begin{cases}
    \{\xi = \Re(z),\ \eta \geq \Im(z)\},
    & \Im(z)>0\\
    \{\xi = \Re(z),\ \eta \leq \Im(z)\}, 
    & \Im(z)<0. 
\end{cases}$    
\end{center}

One of the main tools in our proof is the Paley-Wiener-Schwartz Theorem. Recall that $u \in \mathcal{S}^\prime$ has order $N \in \mathbb{N}_0$ if there exists $C>0$ such that
\begin{equation*}
    |u(\phi)| \leq C \sup_{|\boldsymbol\alpha|\leq N} \sup_{x \in \mathbb{R}^d} \langle x \rangle^N  |\partial_{\boldsymbol\alpha} \phi(x)|,
\end{equation*}
for all $\phi \in \mathcal{S}$.  Note that we are using $\boldsymbol\alpha$ to denote a mulit-index which should not be confused with $\alpha$ in $\partial_t^\alpha$ as defined in section \ref{notation}.  

The following proposition provides a basis for the characterization of solutions with compact support. 
\begin{proposition}
[{\cite[Theorem 7.23]{rudin1974functional}}]\label{PWS}
Let $u \in \mathcal{E}^\prime$ of order $N$ be supported in $rB$ where $B\subseteq \mathbb{R}^d$ is the unit ball centered at the origin and $r>0$. Then $f(z) := u(e_z)$, where $e_z(\xi):=e^{i\xi \cdot z},\ \xi\in\mathbb{R}^d$, is an entire extension of the (inverse) Fourier transform of $u$. Furthermore $f$ satisfies
\begin{equation}\label{exponential}
    |f(z)| \leq C \langle z \rangle^{N} e^{r|\Im(z)|},\ \forall z \in \mathbb{C}^d,
\end{equation}
for some $C>0$. Conversely if an entire function $f$ satisfies \eqref{exponential} for some $C>0,\: N \in \mathbb{N}_0,\ r>0$, then $u:= \mathcal{F}[f|_{\mathbb{R}^d}] \in \mathcal{E}^\prime$ is supported in $rB$.\footnote{However $u$ need not be of order $N$.}
\end{proposition}

The following theorem offers an easily verifiable condition characterizing when the solution to the dispersive equation cannot remain compactly supported. 
\begin{theorem}\label{thm}
Let $\omega$, the dispersion relation, be continuous. Suppose there exists $\phi \in \mathcal{E}^\prime \setminus \{0\}$ and $t\in \mathbb{R}\setminus\{0\}$ such that $supp(U(t)\phi)$ is compact.
\begin{enumerate}
    \item For $d \geq 2$ and for each $1 \leq j \leq d$, there exists a non-open subset $Z_j \subseteq \mathbb{R}^{d-1}$ and a simply-connected open subset $\Omega_j \subseteq \mathbb{C}$ such that whenever $\xi_j^\prime \in \mathbb{R}^{d-1}\setminus Z_j$, $\xi_j \mapsto \omega(\xi_j;\xi_j^\prime)$ has an analytic extension $z_j \mapsto w(z_j;\xi_j^\prime)$ for $z_j \in \Omega_j$. More precisely for each $1 \leq j \leq d$ and $\xi_j^\prime \in \mathbb{R}^{d-1}\setminus Z_j$, there exists a countable subset $E_j\subseteq \mathbb{C}\setminus \{\eta_j=0\}$ with no accumulation points such that $\Omega_j = \mathbb{C} \setminus \bigcup\limits_{z \in E_j}\gamma_z$. If $d=1$, then there exists a countable subset $E_1\subseteq \mathbb{C}\setminus \{\eta_1=0\}$ with no accumulation points such that $\omega(\xi_1)$ has an analytic extension $\omega(z_1)$ for $z_1 \in \Omega_1 = \mathbb{C} \setminus \bigcup\limits_{z \in E_1}\gamma_z$. \label{thm1}
    \item Furthermore if $Z_j = \emptyset$ for all $1 \leq j \leq d$, which vacuously holds for $d=1$, and if $z_j\mapsto \omega(z_j;\xi_j^\prime)$ is holomorphic on some punctured disk centered at every point in $E_j$ for all $\xi_j^\prime \in \mathbb{R}^{d-1}$, then
    \begin{equation}\label{linear}
    \omega(\xi) = \sum\limits_{|\boldsymbol\alpha|\leq 1}a_{\boldsymbol\alpha }\xi^{\boldsymbol\alpha}   
    \end{equation}
    for some $a_{\boldsymbol\alpha}\in\mathbb{R}$.\label{thm2}
\end{enumerate}
\end{theorem}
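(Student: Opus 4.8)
The plan is to exploit that \emph{both} $\phi$ and $U(t)\phi$ are compactly supported distributions. Since $\operatorname{supp}(U(t)\phi)$ is compact, $U(t)\phi\in\mathcal E'$, so by \Cref{PWS} both $\widehat\phi$ and $\widehat{U(t)\phi}$ extend to entire functions on $\mathbb C^d$ obeying the exponential-type bound \eqref{exponential}, while on $\mathbb R^d$ the multiplier identity reads $\widehat{U(t)\phi}(\xi)=e^{-itw(\xi)}\widehat\phi(\xi)$. First I would fix a coordinate $1\le j\le d$ and a slice $\xi_j'\in\mathbb R^{d-1}$ for which $z_j\mapsto\widehat\phi(z_j;\xi_j')$ is not identically zero, and form the one-variable meromorphic quotient $g(z_j)=\widehat{U(t)\phi}(z_j;\xi_j')/\widehat\phi(z_j;\xi_j')$, which equals $e^{-itw(\xi_j;\xi_j')}$ on the real axis. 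Because $w$ is real valued, $|g|\equiv 1$ wherever it is defined on $\mathbb R$; hence $g$ can have neither pole nor zero on $\mathbb R$ (every real zero of $\widehat\phi$ is matched, to the same order, by one of $\widehat{U(t)\phi}$), so $g$ is holomorphic and non-vanishing in a neighbourhood of $\mathbb R$. Its remaining zeros and poles form a countable set $E_j\subset\mathbb C\setminus\{\eta_j=0\}$ with no accumulation point, since the zeros of a nonzero entire function of one variable are isolated.

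For \ref{thm1} I would then delete from $\mathbb C$ the vertical rays $\gamma_z$ emanating \emph{away} from the real axis at each $z\in E_j$; the resulting $\Omega_j=\mathbb C\setminus\bigcup_{z\in E_j}\gamma_z$ is simply connected, contains $\mathbb R$, and carries a single-valued branch of $\log g$ because $g$ is holomorphic and non-vanishing there. Setting $w(z_j;\xi_j')=\tfrac{i}{t}\log g(z_j)$ (fixing the additive $2\pi/t$ ambiguity so that it matches $w$ at one real point, continuity propagating the match along $\mathbb R$) is the desired analytic extension. The exceptional set is $Z_j=\{\xi_j':\widehat\phi(\cdot;\xi_j')\equiv 0\}$, which has empty interior, hence is non-open: if $\widehat\phi$ vanished on an open slab $\mathbb R\times V$ then the real-analyticity of $\widehat\phi$ on $\mathbb R^d$ would force $\widehat\phi\equiv 0$, i.e. $\phi=0$. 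The case $d=1$ is the same construction with no slice to fix.

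For \ref{thm2}, the extra hypothesis that $w(\cdot;\xi_j')$ be holomorphic on a punctured disk about every point of $E_j$ removes all monodromy: a genuine zero or pole of $g$ of order $m\neq 0$ would make $\tfrac{i}{t}\log g\sim\mathrm{const}\cdot\log(z_j-z_0)$ multivalued and contradict single-valuedness, forcing $E_j=\emptyset$ and each slice $w(\cdot;\xi_j')$ entire. Next I would record growth: the exponential-type bound on the numerator, together with the lower bound of \Cref{lb entire} for the denominator (a nontrivial entire function of exponential type cannot decay too rapidly, valid off a sparse family of exceptional disks), and the symmetric estimate for $1/g$, control $\big|\log|g(z_j)|\big|=t|\Im w(z_j;\xi_j')|$ by $O(|z_j|^{1+\varepsilon})$ on a sequence of circles avoiding the exceptional disks. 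Borel--Carath\'eodory then forces each slice to be affine. Being affine in each variable separately, $w$ is a real multi-affine polynomial $\sum_S c_S\prod_{j\in S}\xi_j$ with $c_S\in\mathbb R$ (reality on $\mathbb R^d$), and it extends to $\mathbb C^d$ as the same polynomial.

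Finally, to reduce the total degree to one, I would restrict to a complex line $\zeta\mapsto\zeta v$ with $v\in\mathbb R^d$ generic. By the identity theorem the multiplier relation persists on all of $\mathbb C^d$, so $g(\zeta v)=\widehat{U(t)\phi}(\zeta v)/\widehat\phi(\zeta v)=e^{-itw(\zeta v)}$ is an entire function of $\zeta$ equal to a quotient of two functions of exponential type; the same minimum-modulus reasoning bounds its order by $1$. On the other hand $w(\zeta v)$ is a polynomial in $\zeta$ whose degree equals $\deg w$ for generic $v$ (the top homogeneous part of $w$ does not vanish identically), so $e^{-itw(\zeta v)}$ has order $\deg w$; hence $\deg w\le 1$, which is \eqref{linear}. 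I expect the main obstacle to be precisely the control of $1/\widehat\phi$: entire functions have zeros, so the lower bound needed to bound the quotient $g$ holds only off an exceptional set, and one must invoke \Cref{lb entire} together with the maximum principle (rather than a naive pointwise estimate) to convert it into a genuine order bound — the same delicacy that makes the simple-connectivity and no-accumulation bookkeeping of Part \ref{thm1} necessary rather than cosmetic.
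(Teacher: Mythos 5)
Your proposal is correct and, at the structural level, it is the same proof as the paper's: Paley--Wiener--Schwartz extensions $f,g$ of $\widehat{U(t)\phi}$ and $\widehat{\phi}$, the quotient $F=f/g$ with $|F|\equiv 1$ on $\mathbb{R}$ (hence no real zeros or poles), a holomorphic logarithm on the slit plane $\Omega_j$ for part \ref{thm1}, and, for part \ref{thm2}, minimum-modulus growth bounds making each slice affine, the multi-affine globalization (the paper's \Cref{poly2}), and restriction to real lines through the origin to force total degree one. One local step is genuinely different and arguably cleaner: to show $E_j=\emptyset$ you use monodromy, i.e.\ a single-valued holomorphic $w$ on a punctured disk with $e^{-itw}=F$ forces $\oint F'/F\,dz=-it\oint w'\,dz=0$, so every zero or pole of $F$ there has order zero; the paper instead shows via Casorati--Weierstrass that $w$ would have to have a pole, whence $e^{-itw}$ would have an essential singularity rather than a pole, a contradiction. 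Your version also treats zeros and poles of $F$ symmetrically, so you do not need the paper's reflection identity $F(z_1)\overline{F(\overline{z_1})}=1$.

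The one place where your argument outruns the cited tools is the growth step. \Cref{lb entire} lower-bounds a single given entire function along one sequence of circles; you ask for simultaneous lower bounds on $f$ and $g$ (to control $|\Im w|$ from both sides), and in the final step you propose to convert a bound on such circles into a genuine order bound for $e^{-itw(\zeta v)}$ via the maximum principle. That conversion fails if the sequence $\rho_n$ is lacunary: the maximum principle gives no useful control between widely separated circles, so an honest order bound would require a Cartan-type minimum-modulus estimate valid off exceptional disks, which is a stronger (though standard) result than \Cref{lb entire}. Both difficulties disappear if you argue as the paper does: keep only the one-sided bound $t\,\Im w\le C(1+\rho_n)$ on the circle sequence furnished by $g$ alone, and conclude affineness directly along that sequence (Borel--Carath\'eodory plus Cauchy estimates at the radii $\rho_n$, which is exactly the paper's \Cref{lm}), both for each slice $w(\cdot;\xi_j^\prime)$ and for the restriction $\zeta\mapsto w(\zeta v)$ to a generic real line. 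With that adjustment your proof is complete and coincides in substance with the paper's.
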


Before the proof of \Cref{thm}, a discussion on the sharpness of the hypotheses of \Cref{thm} is given.

\begin{remark}\label{rmk1}
The continuity assumption of $\omega$ cannot be relaxed for $\omega$ to have an analytic extension. We give a motivating example for $d=1$.

Consider $\omega(\xi) = 1$ if $|\xi|\leq 1$ and $\omega(\xi) = 0$ otherwise. By direct computation,
\begin{equation*}
K_t(x) = \delta(x) + 2\frac{\sin x}{x}(e^{-it}-1),\ (x,t)\in \mathbb{R}^{1+1}.
\end{equation*}
Hence for every $\phi \in \mathcal{E}^\prime$ and $t \in 2\pi \mathbb{Z}$, we have
\begin{equation*}
    supp(K_t \ast \phi) = supp(\phi).
\end{equation*}

If $t \notin 2\pi\mathbb{Z}$ and $\phi \in \mathcal{E}^\prime$, we show that the compactness of $supp(K_t\ast\phi)$ implies $\phi = 0$. Note that $e^{-it\omega}\widehat{\phi}$ extends entirely and so does $\widehat{\phi}$. For all $|\xi| \leq 1$, $e^{-it\omega(\xi)}\widehat{\phi}(\xi) = e^{-it}\widehat{\phi}(\xi)$, and therefore the equality holds for all $\xi\in\mathbb{R}$ by analytic continuation. Reasoning similarly on $|\xi|>1$, we obtain
\begin{equation*}
    e^{-it\omega(\xi)}\widehat{\phi}(\xi) = e^{-it}\widehat{\phi}(\xi) = \widehat{\phi}(\xi),\ \forall \xi\in\mathbb{R},
\end{equation*}
and hence the claim. For $d \geq 2$, consider $\Tilde{\omega}:\mathbb{R}^d \rightarrow \mathbb{R}$ defined by $\Tilde{\omega}(\xi_1,\dots,\xi_d) = \omega(\xi_d)$ where $\omega$ is given as above for $d=1$.
\end{remark}

\begin{remark}
To elucidate \Cref{thm} \eqref{thm2}, let $d=1$. If a non-polynomial $\omega$ has an analytic extension to $\mathbb{C}\setminus E$, with $E$ discrete, then $supp(U(t)\phi)$ is not compact for any $\phi \in \mathcal{E}^\prime \setminus \{0\}$ and $t \neq 0$. This conclusion need not hold if the singularities of $\omega$ are not isolated.

As an example, consider $\omega(z) = -2\tan^{-1}z = i \log (\frac{i-z}{i+z})$ for $z \in \mathbb{C}$ where the branch cuts are $\gamma_i \cup \gamma_{-i} = \{(\xi,\eta):\xi = 0,\ |\eta|\geq 1\}$. Let $F(z):= e^{-it\omega(z)} = (\frac{i-z}{i+z})^t$. If $\phi \in \mathcal{E}^\prime \setminus \{0\}$ and $U(t)\phi$ has compact support, then $F(z)g(z)$ is entire by \Cref{PWS}, where $g(z)$ is the entire extension of $\widehat{\phi}$, and therefore $F(z)$ must be meromorphic. Hence for any $t \notin \mathbb{Z}$, $supp(U(t)\phi)$ is never compact for any $\phi \in \mathcal{E}^\prime \setminus \{0\}$.

For $t \in \mathbb{Z}\setminus \{0\}$, define $\phi_t$ by $\mathcal{F}[\phi_t]= g_t(z) := (z^2+1)^{|t|} h(z)$ where an entire function $h$ does not vanish at $z=\pm i$ and satisfies \Cref{exponential}. Then $supp(U(t^\prime)\phi_t)$ is compact for $t^\prime = -|t|,-|t|+1,\dots,0,\dots,|t|-1,|t|$, and otherwise, not compact. If $g(\pm i)\neq 0$, then $supp(U(t)\phi)$ compact only if $t=0$.
\end{remark}

By \Cref{PWS}, our focus is on entire functions of exponential type. Recall that entire functions that grow as polynomials are indeed polynomials.

\begin{lemma}[{\cite[Lemma 5.5]{stein2010complex}}]\label{lm}
Let $p(z) = u(z)+iv(z)$ be entire where $z\in\mathbb{C}$. Suppose there exists $C,s>0$ and a positive sequence $\rho_n \xrightarrow[n\to \infty]{}\infty$ that satisfies $v(z) \leq C \rho_n^s$ (or $v(z) \geq -C \rho_n^s$) whenever $|z|=\rho_n$ for all $n \in \mathbb{N}$. Then $p$ is a polynomial of degree at most $s$. 
\end{lemma}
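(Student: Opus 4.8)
The plan is to reduce the control of the polynomial degree of $p$ to a one-sided bound on the \emph{real} part of an auxiliary entire function, and then to read off the Taylor coefficients by a Schwarz-type coefficient formula. First I would pass to the auxiliary function. In the case $v(z) \le C\rho_n^s$ I set $q := -ip$, so that $\Re q = v$; in the case $v(z) \ge -C\rho_n^s$ I instead set $q := ip$, so that $\Re q = -v$. In either case $q$ is entire, $\Re q(z) \le C\rho_n^s$ on each circle $|z| = \rho_n$, and $p = \pm i q$ differs from $q$ by a unimodular constant and hence has the same degree. Thus it suffices to prove the following: if $q(z) = \sum_{k \ge 0} a_k z^k$ is entire with $\Re q \le C\rho_n^s$ on $|z| = \rho_n$, then $a_k = 0$ for every integer $k > s$.

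The key step is the recovery of the coefficients of $q$ from its real part on a circle. Expanding $q$ on $|z| = R$ and integrating against $e^{-ik\theta}$ gives, for every $k \ge 1$,
\begin{equation*}
R^k a_k = \frac{1}{\pi}\int_0^{2\pi} \Re q(Re^{i\theta})\, e^{-ik\theta}\, d\theta,
\end{equation*}
while the mean of $\Re q$ over the circle recovers $\Re a_0$. The decisive observation is that $\int_0^{2\pi} e^{-ik\theta}\, d\theta = 0$ for $k \ge 1$, so I may subtract any real constant $M$ from $\Re q$ without changing the integral:
\begin{equation*}
R^k a_k = \frac{1}{\pi}\int_0^{2\pi} \bigl(\Re q(Re^{i\theta}) - M\bigr)\, e^{-ik\theta}\, d\theta.
\end{equation*}
Choosing $R = \rho_n$ and $M = C\rho_n^s$ makes the factor $\Re q - M$ nonpositive on the entire circle, so passing the modulus inside and using $|e^{-ik\theta}| = 1$ yields
\begin{equation*}
\rho_n^k\, |a_k| \le \frac{1}{\pi}\int_0^{2\pi} \bigl(M - \Re q(\rho_n e^{i\theta})\bigr)\, d\theta = 2C\rho_n^s - 2\,\Re a_0.
\end{equation*}

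Finally I would let $n \to \infty$ along the given sequence of radii. For any integer $k > s$ the right-hand side grows no faster than $\rho_n^s$, whereas the left-hand side carries the factor $\rho_n^k$; hence $|a_k| \le (2C\rho_n^s - 2\Re a_0)/\rho_n^k \to 0$, forcing $a_k = 0$. Thus $q$, and with it $p = \pm i q$, is a polynomial of degree at most $s$. I expect the main obstacle to be conceptual rather than computational: a naive application of the Cauchy estimates fails because the hypothesis controls only an upper bound for $\Re q$, not the modulus $|q|$, so a priori $q$ may grow arbitrarily fast in the directions where $\Re q$ is very negative. The device that overcomes this is precisely the subtraction of the constant $M$ together with the vanishing of $\int_0^{2\pi} e^{-ik\theta}\,d\theta$, which converts the one-sided bound into a genuine estimate on $|a_k|$; this is the step that must be set up carefully. (Equivalently, one could first invoke the Borel--Carath\'eodory inequality to bound $\max_{|z|=\rho_n/2}|q|$ by $C\rho_n^s$ and then apply the usual Cauchy estimates, but the direct coefficient computation is cleaner and keeps the argument self-contained.)
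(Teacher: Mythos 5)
Your proof is correct and is essentially the argument of the paper's cited source (Stein--Shakarchi, Lemma 5.5): the paper itself gives no proof, deferring entirely to that reference, whose proof is exactly your coefficient formula $R^k a_k = \frac{1}{\pi}\int_0^{2\pi}\Re q(Re^{i\theta})e^{-ik\theta}\,d\theta$ combined with subtracting the constant $M = C\rho_n^s$ to exploit the one-sided bound. Your only addition is the preliminary rotation $q = \mp i p$ converting the hypothesis on the imaginary part $v$ into one on $\Re q$, which is precisely the right reduction since the paper states the lemma for $v$ rather than $u$.
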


\begin{lemma}\label{poly2}
Let $\xi\mapsto \omega(\xi)\in\mathbb{R}$ be continuous on $\mathbb{R}^d$. For all $\xi_j^\prime \in\mathbb{R}^{d-1},\ 1 \leq j \leq d$, assume that $\xi_j\mapsto \omega(\xi_j;\xi_j^\prime)$ has an entire extension $z_j\mapsto \omega(z_j;\xi_j^\prime)$, which is further assumed to be a linear polynomial in $z_j \in \mathbb{C}$. Then $\omega$ has an entire extension on $\mathbb{C}^d$ given by
\begin{equation}\label{homogeneous}
    \omega(z) = \sum_{\boldsymbol\alpha_j \in \{0,1\},\ 1 \leq j \leq d} a_{\boldsymbol\alpha} z^{\boldsymbol\alpha},
\end{equation}
where $a_{\boldsymbol\alpha} \in \mathbb{R}$.
\end{lemma}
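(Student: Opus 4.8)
The plan is to proceed by induction on the dimension $d$, peeling off one coordinate at a time. The base case $d=1$ is immediate: by assumption $w(\xi_1)$ extends to a linear polynomial in $z_1$, which is already of the form \eqref{homogeneous} (with $\alpha_1 \in \{0,1\}$), and since $w$ is real-valued on $\mathbb{R}$ its two coefficients are real.

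For the inductive step I would single out the last coordinate. By hypothesis, for each fixed $\xi_d' = (\xi_1,\dots,\xi_{d-1}) \in \mathbb{R}^{d-1}$ the slice $\xi_d \mapsto w(\xi_d;\xi_d')$ is affine, so recording its values at $\xi_d=0$ and $\xi_d=1$ gives
\[ w(\xi) = A(\xi_d')\,\xi_d + B(\xi_d'), \quad B(\xi_d') := w(0;\xi_d'), \quad A(\xi_d') := w(1;\xi_d') - w(0;\xi_d'). \]
The crux of the argument is to verify that the coefficient functions $A,B:\mathbb{R}^{d-1}\to\mathbb{R}$ themselves satisfy the hypotheses of the lemma in dimension $d-1$, so that the inductive hypothesis may be applied to each. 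Continuity of $A$ and $B$ is inherited from that of $w$. For separate affineness, fix $1 \le j \le d-1$ together with all coordinates other than $\xi_j$; then $\xi_j \mapsto B(\cdots)$ is precisely the $j$-th slice of $w$ with $\xi_d$ frozen at $0$, which is affine by hypothesis, while $\xi_j \mapsto A(\cdots)$ is a difference of two such slices (with $\xi_d$ frozen at $1$ and at $0$), hence also affine. Since a real affine function of one variable extends to a linear polynomial in the corresponding complex variable, $A$ and $B$ each admit linear entire single-variable extensions, as the lemma requires.

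Applying the inductive hypothesis therefore yields multiaffine representations $A(\xi_d') = \sum_{\alpha' \in \{0,1\}^{d-1}} a_{\alpha'}(\xi_d')^{\alpha'}$ and $B(\xi_d') = \sum_{\alpha' \in \{0,1\}^{d-1}} b_{\alpha'}(\xi_d')^{\alpha'}$ with real coefficients. Substituting into the display above expresses $w$ as $\sum_{\alpha'} a_{\alpha'}(\xi_d')^{\alpha'}\xi_d + \sum_{\alpha'} b_{\alpha'}(\xi_d')^{\alpha'}$, which is exactly the form \eqref{homogeneous} on $\mathbb{R}^d$ with real coefficients; the associated polynomial in $z\in\mathbb{C}^d$ is entire, restricts to $w$ on $\mathbb{R}^d$, and — being affine in each $z_j$ separately — must coincide with the prescribed single-variable entire extensions by the identity theorem. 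I expect the only genuine subtlety to be the bookkeeping of the second paragraph, namely checking that evaluating $w$ at the two nodes $\xi_d\in\{0,1\}$ preserves separate affineness in the remaining variables. This rigidity is precisely what the \emph{polynomial} (rather than merely additive) nature of the slices buys us: the coefficients $A$ and $B$ are recovered by finite evaluation, so no pathological separately-additive (Hamel-basis) behaviour can intrude, and the multiaffine structure propagates cleanly through the induction.
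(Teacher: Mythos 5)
Your proof is correct and follows essentially the same route as the paper's: an induction on the dimension in which the coefficient functions are recovered by evaluating at the nodes $\{0,1\}$, observed to inherit the separate-affineness (and continuity) hypotheses, and the realness of the coefficients is deduced from $w$ being real-valued on $\mathbb{R}^d$. The only difference is organizational — you peel off the last variable first and apply the inductive hypothesis to the coefficient functions $A,B$ on $\mathbb{R}^{d-1}$, whereas the paper applies the inductive hypothesis to the slices $w(\cdot;\xi_d)$ and then upgrades the $\xi_d$-dependence of the resulting coefficients — which is a mirror image of the same argument.
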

\begin{proof}
The proof is by induction. Let $d=2$. For any $\xi_2 \in \mathbb{R}$, an entire extension in the first variable implies
\begin{equation*}
    \omega(z_1,\xi_2) = A_1(\xi_2)z_1 + A_0(\xi_2).
\end{equation*}
Substituting $z_1 \in \{0,1\}$, the coefficient functions $A_1,A_0$ extend to polynomials in $z_2 \in \mathbb{C}$, and therefore $\omega$ has the desired form with $a_{\boldsymbol\alpha} \in \mathbb{C}$. That $a_{\boldsymbol\alpha}$ is real follows from $\omega(\xi) \in \mathbb{R}$ for all $\xi \in \mathbb{R}^d$.

For $d \geq 2$, $\omega$ with any fixed $\xi_d\in\mathbb{R}$ satisfies the inductive hypothesis. Then
\begin{equation*}
    \omega(z_1,\dots,z_{d-1},\xi_d) = \sum_{\boldsymbol{\alpha_j} \in \{0,1\},\ 1 \leq j \leq d-1} a_{\boldsymbol\alpha(\xi_d)} z_1^{\boldsymbol\alpha_1}\cdots z_{d-1}^{\boldsymbol\alpha_{d-1}},
\end{equation*}
Substituting $z_j \in \{0,1\}$ for $1 \leq j \leq d-1$ and extending $\omega$ to a linear polynomial in the $\xi_d$-variable, \eqref{homogeneous} follows.
\end{proof}

Entire functions that grow faster than polynomials are classified by their order. For $r>0$, define
\begin{equation}\label{minmax}
M(r) = \max_{|z|=r} |f(z)|;\ m(r) = \min_{|z|=r} |f(z)|,    
\end{equation}
where $f$ is entire on $\mathbb{C}$.
\begin{definition}
Given a single-variate entire function $f$, the order of $f$ is defined as
\begin{equation*}
    \rho :=\varlimsup_{r\to\infty} \frac{\log\log M(r)}{\log r} \in [0,\infty].
\end{equation*}
Equivalently $f$ is of order $\rho<\infty$, if and only if, for any $\epsilon>0$,
\begin{equation*}
    M(r) = O_\epsilon(\exp(r^{\rho+\epsilon}))
\end{equation*}
holds as $r \rightarrow \infty$, but not for any $\epsilon < 0$.
\end{definition}

On the other hand, non-constant entire functions cannot decay too rapidly. In the following lemma, we consider some special cases of Theorem 3.2.11, Theorem 3.3.1, and Theorem 3.6.2 of \cite{boas2011entire} to prove \Cref{thm}. Note that $\rho$ is the order of an entire function and $\{\rho_n\}$ is a positive sequence.

\begin{lemma}\label{lb entire}
Let $f$ be a single-variate entire function with order $\rho \in [0,1]$. For all $\epsilon>0$, there exists a positive sequence $\rho_n\xrightarrow[n\to\infty]{}\infty$ such that
\begin{equation*}
    m(\rho_n)>M(\rho_n)^{\cos (\pi \rho) - \epsilon}.
\end{equation*}
\end{lemma}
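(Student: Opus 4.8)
The plan is to reduce the claim to the canonical Hadamard product of $f$ and then to read off the constant $\cos\pi\rho$ from a single explicit integral. Since $f$ has order $\rho \le 1$, Hadamard's factorization gives $f(z) = c\,z^{m} e^{bz}\prod_n E_p(z/a_n)$ with genus $p \le 1$, where $\{a_n\}$ are the zeros and $b = 0$ when $\rho < 1$. The monomial $z^m$ shifts both $\log M(r)$ and $\log m(r)$ by the same $m\log r = o(\log M(r))$, while the factor $e^{bz}$ (present only at $\rho = 1$) changes them by $\pm|b|r$, which is already consistent with the bound because $\cos\pi\rho = -1$ there; so it suffices to treat the canonical product $P(z) = \prod_n E_p(z/a_n)$. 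Writing $a_n = |a_n|e^{i\psi_n}$, I would start from
\begin{equation*}
\log|P(re^{i\theta})| = \sum_n \log\Bigl|E_p\Bigl(\tfrac{r}{|a_n|}e^{i(\theta-\psi_n)}\Bigr)\Bigr| \qquad\text{and}\qquad \log M(r) \le (1+o(1))\sum_n \log\Bigl(1 + \tfrac{r}{|a_n|}\Bigr),
\end{equation*}
the second following because the angular maximum of each elementary factor equals $\log(1 + r/|a_n|)$ to leading order (exactly so for genus zero).

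The heart of the matter is the \emph{cosine inequality}, and the cleanest way to locate the constant is the extremal configuration in which all zeros lie on one ray with counting function $n(t)\sim t^\rho$. After the substitution $t = rs$ this collapses the sum to
\begin{equation*}
\log|P(re^{i\theta})| = r^\rho\,\rho\int_0^\infty \log\Bigl|1 - \tfrac{e^{i\theta}}{s}\Bigr|\,s^{\rho-1}\,ds + o(r^\rho),
\end{equation*}
and the classical evaluation $\rho\int_0^\infty \log|1 - e^{i\theta}/s|\,s^{\rho-1}\,ds = \frac{\pi}{\sin\pi\rho}\cos\bigl(\rho(\pi-\theta)\bigr)$ for $\theta\in[0,\pi]$ (obtained from $\int_0^\infty \frac{t^{-\rho}}{1+t}\,dt = \frac{\pi}{\sin\pi\rho}$ by one integration by parts) shows this integral decreases from $\frac{\pi}{\sin\pi\rho}$ at $\theta=\pi$, where $\log M$ is attained, down to $\frac{\pi\cos\pi\rho}{\sin\pi\rho}$ as $\theta\to 0$. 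Taking the ratio identifies the minimal value of $\log|P(re^{i\theta})|/\log M(r)$ as exactly $\cos\pi\rho$. For a general zero set I would then show that this ray configuration is extremal for the ratio $\log m(r)/\log M(r)$ — spreading the zeros off a common ray only raises the angular minimum — so that $\log|P(re^{i\theta})| \ge (\cos\pi\rho)\log M(r) - o(\log M(r))$ uniformly in $\theta$, provided $r$ is kept away from the moduli $|a_n|$.

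The remaining and genuinely delicate point is that $m(r)$ is the minimum over the \emph{entire} circle, so it dives to $-\infty$ whenever $r$ passes through a zero or a tight cluster of zeros; the estimate cannot hold for every $r$, which is precisely why only a sequence $\rho_n\to\infty$ is asserted. I would handle this with a logarithmic-measure argument: the radii $r$ for which some $re^{i\theta}$ lies within a small relative distance of a zero form a set of finite logarithmic measure (a Boutroux–Cartan type covering, using that $n(t) = O(t^{\rho+\epsilon})$ makes the clusters sparse), so its complement contains radii $\rho_n\to\infty$ on which a Poisson–Jensen/Harnack comparison upgrades the averaged lower bound to the pointwise-in-$\theta$ bound $\log m(\rho_n) \ge (\cos\pi\rho - \epsilon)\log M(\rho_n)$. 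I expect this simultaneous avoidance of all zero clusters while preserving the cosine estimate uniformly in $\theta$ to be the main obstacle; it is also what confines the conclusion to $\rho\le 1$, where the genus is at most one and the elementary-factor estimates above remain available.
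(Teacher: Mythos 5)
The paper does not prove this lemma at all --- it is imported verbatim from Boas \cite[Theorems 3.2.11, 3.3.1]{boas2011entire} --- so your sketch has to be judged against the classical proofs of the $\cos\pi\rho$ theorem, and against those it has two genuine gaps. First, the step you defer as needing proof (``spreading the zeros off a common ray only raises the angular minimum'') is actually the \emph{trivial} step, at least for genus zero: on $|z|=r$ one has $|1-z/a_n|\ge \bigl|1-r/|a_n|\bigr|$ and $|1-z/a_n|\le 1+r/|a_n|$ factor by factor, so $\log m(r)\ge\sum_n\log\bigl|1-r/|a_n|\bigr|$ and $\log M(r)\le O(\log r)+\sum_n\log(1+r/|a_n|)$ with no angular analysis whatsoever. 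The real gap is \emph{radial}: your identification of the constant assumes $n(t)\sim t^\rho$, but a general function of order $\rho$ only satisfies $\limsup_{t\to\infty}\log n(t)/\log t\le\rho$, and $n(t)$ may alternate between long plateaus and bursts of zeros. For such irregular $n$ the ratio $\int_0^\infty\log|1-r/t|\,dn(t)\,\big/\int_0^\infty\log(1+r/t)\,dn(t)$ can sit below $\cos\pi\rho-\epsilon$ over long ranges of $r$ (locally the function mimics a higher order), and producing radii where it climbs back above $\cos\pi\rho-\epsilon$ is precisely the content of the theorem. This requires selecting radii adapted to the growth --- P\'olya-peak-type ``normal'' values of $\log M$, or the comparison arguments of Kjellberg and Heins that underlie Boas's treatment --- and nothing in your sketch supplies that selection; what you prove is the regular (extremal) case only, where in fact the bound holds for essentially all large $r$ and the sequence phenomenon is invisible.

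Second, your mechanism for extracting the sequence $\rho_n$ is wrong in principle. The bad set is not a finite-logarithmic-measure neighborhood of the zeros: by the Besicovitch--Barry refinements, the set where $\log m(r)>\cos(\pi\sigma)\log M(r)$ is only guaranteed lower logarithmic density $1-\rho/\sigma$ for $\rho<\sigma<1$, and the sharpness examples show the complementary set --- on which the desired inequality \emph{fails} --- can have positive upper logarithmic density, indeed density arbitrarily close to $1$, hence infinite logarithmic measure, even at radii far from any zero. So one cannot reach $\rho_n$ by avoiding a small exceptional set; the sequence must be constructed from the growth of $\log M$. Quantitatively, a Boutroux--Cartan covering only yields $\log|f(z)|>-r^{\rho+\epsilon}$ off small disks, which is vacuous against the required \emph{positive} lower bound $(\cos\pi\rho-\epsilon)\log M(r)$ whenever $\rho<1/2$, and Harnack-type comparisons apply to positive harmonic functions and degrade multiplicative constants, so they cannot preserve the sharp constant $\cos\pi\rho$. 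A further, smaller issue: at $\rho=1$ the genus may be $1$, where $\log|E_1(z/a_n)|=\log|1-z/a_n|+\Re(z/a_n)$ and the factorwise angular reduction above breaks, while your implicit bound $\log M(r)\gtrsim|b|r$ can be defeated by cancellation between $e^{bz}$ and the product; this case is not the afterthought of your first paragraph, which is why Boas gives it a separate theorem (3.3.1).
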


\begin{proof}[Proof of \Cref{thm} \eqref{thm1}]
Let $d \geq 2$ and $j=1$ without loss of generality; the statement corresponding to $d=1$ is consequential. By \Cref{PWS}, $e^{-it\omega}\widehat{\phi},\ \widehat{\phi}$ have entire extensions $f,g$, respectively. Since $g(\cdot;\xi_1^\prime)$ is entire for $\xi_1^\prime \in \mathbb{R}^{d-1}$, its zeros are isolated or $g(\cdot;\xi_1^\prime)\equiv 0$. That $\phi$ is non-zero implies $Z_1:=\{\xi_1^\prime \in \mathbb{R}^{d-1}: g(\cdot;\xi_1^\prime)\equiv 0\}$ is not open in $\mathbb{R}^{d-1}$. Then $z_1 \mapsto F(z_1;\xi_1^\prime):=\frac{f(z_1;\xi_1^\prime)}{g(z_1;\xi_1^\prime)}$ has isolated singularities for $\xi_1^\prime \in \mathbb{R}^{d-1}\setminus Z_1$. Denote $\mathcal{C}_1 = \mathcal{C}_1(\xi_1^\prime)$ by the set of poles of $F(\cdot;\xi_1^\prime)$.

Suppose $p \in \mathcal{C}_1 \cap \mathbb{R}$ is a pole of $F(\cdot;\xi_1^\prime)$. Then there exists a small neighborhood $U\subseteq \mathbb{C}$ around $p$ and a non-vanishing holomorphic function $h$ on $U$ such that
\begin{equation}\label{pole}
    F(z_1;\xi_1^\prime) = (z_1-p)^{-m} h(z_1),\ \forall z_1 \in U\setminus \{p\},
\end{equation}
for some $m\in\mathbb{N}$. Since $|F(z_1;\xi_1^\prime)|=1$ for all $z_1 \in (U \cap \mathbb{R})\setminus \{p\}$ by the definition of $f,g$, we have a contradiction. Therefore $\mathcal{C}_1\cap \mathbb{R} = \emptyset$. 

We claim that $p \in \mathcal{C}_1$ if and only if $\overline{p}$ is a zero of $F(\cdot;\xi_1^\prime)$. Let $E_1 = \mathcal{C}_1 \cup \overline{\mathcal{C}_1}$ where $\overline{\mathcal{C}_1} = \{\overline{z}\in\mathbb{C}: z \in \mathcal{C}_1\}$. Since $F(\cdot;\xi_1^\prime)$ is meromorphic, so is $z_1\mapsto\overline{F(\overline{z_1};\xi_1^\prime)}$. Noting that $|F(z_1;\xi_1^\prime)|=1$ for all $z_1 \in \mathbb{R}$, it follows by analytic continuation that
\begin{equation}\label{product}    F(z_1;\xi_1^\prime)\overline{F(\overline{z_1};\xi_1^\prime)}=1,\ \forall z_1 \in \mathbb{C}\setminus E_1.
\end{equation}
In particular, $F(\cdot;\xi_1^\prime)$ is non-zero on $\mathbb{C}\setminus E_1$. If $p \in \mathcal{C}_1$, then $F(z_1;\xi_1^\prime)$ has a meromorphic representation as \eqref{pole} on some punctured disk centered at $p$, and therefore by \eqref{product},
\begin{equation*}    h(z_1)\overline{F(\overline{z_1};\xi_1^\prime)} = (z_1-p)^m. 
\end{equation*}
Taking $z_1 \rightarrow p$, we have $F(\overline{p};\xi_1^\prime)=0$ for $\overline{p} \in \overline{\mathcal{C}_1}$.

Since $F(\cdot;\xi_1^\prime)$ is non-zero on the simply-connected set $\Omega_1 = \mathbb{C} \setminus \bigcup\limits_{z \in E_1}\gamma_z$, there exists a holomorphic function $W$ on $\Omega_1$ such that $e^{-itW(z_1)}=F(z_1;\xi_1^\prime)$ for all $z_1 \in \Omega_1$. Restricting to $z_1=\xi_1 \in \Omega_1\cap\mathbb{R} = \mathbb{R}$, we have $e^{-itW(\xi_1)} = e^{-it\omega(\xi_1;\xi_1^\prime)}$, and therefore
\begin{equation*}
    \Re W(\xi_1) - \omega(\xi_1;\xi_1^\prime) = \frac{2\pi k(\xi_1)}{t},\ \Im W(\xi_1)=0, 
\end{equation*}
for some $k(\xi_1)\in\mathbb{Z}$. By continuity in $\xi_1$, $k$ is independent of $\xi_1$. Defining $\Tilde{W}(z_1) = W(z_1) - \frac{2\pi k}{t}$, we have
\begin{equation*}
\begin{aligned}
e^{-it\Tilde{W}(z_1)} &= F(z_1,\xi_1^\prime),\ \forall z_1 \in \Omega_1,\\
\Tilde{W}(\xi_1) &= \omega(\xi_1;\xi_1^\prime),\ \forall \xi_1 \in \mathbb{R}. 
\end{aligned}
\end{equation*}
Hence $\Tilde{W}$ is the analytic continuation of $\omega(\cdot;\xi_1^\prime)$. 
\end{proof}

\begin{proof}[Proof of \Cref{thm} \eqref{thm2}]
We claim $\mathcal{C}_1 = \emptyset$. By contradiction, let $p \in \mathcal{C}_1$. Then for any $\xi_1^\prime \in \mathbb{R}^{d-1}$, $F(z_1;\xi_1^\prime)$ has a meromorphic representation for $z_1 \in U \setminus \{p\}$ by \eqref{pole}. Let $\omega(z_1;\xi_1^\prime) = u(z_1)+iv(z_1)$ denote the real and imaginary parts of $\omega$. Taking the modulus and logarithm on both sides of \eqref{pole}, obtain
\begin{equation*}
\begin{aligned}
e^{tv(z_1)} &= |z_1-p|^{-m} |h(z_1)|,\\
v(z_1) &= -\frac{m}{t} \log |z_1-p| + \frac{1}{t}\log |h(z_1)|,\ \forall z_1 \in U \setminus \{p\}.
\end{aligned}
\end{equation*}

Since the range of $\omega(\cdot;\xi_1^\prime)$ on $U \setminus \{p\}$ is not dense, $\omega(\cdot;\xi_1^\prime)$ must have a pole at $p\in\mathcal{C}_1$ by the Casorati–Weierstrass Theorem. Then $\omega(\cdot;\xi_1^\prime)$ is of the form on the RHS of \eqref{pole}, which implies $F(\cdot;\xi_1^\prime)$ has an essential singularity at $p\in\mathcal{C}_1$, a contradiction. Hence $\mathcal{C}_1 = \emptyset$ and therefore $\omega(\cdot;\xi_j^\prime)$ has an entire extension for every $\xi_j^\prime \in \mathbb{R}^{d-1}$. 

Consider the first case where $\rho \in [\frac{1}{2},1]$ is the order of $g$. By \Cref{PWS}, there exists $C_i>0,\ N_i \in \mathbb{Z},\ r_i>0$ for $i=1,2$ such that
\begin{equation}\label{PWS2}
\begin{aligned}
e^{tv(z_1)}|g(z_1;\xi_1^\prime)| &\leq C_1 (1+|z_1|+\sum_{j=2}^d |\xi_j|)^{N_1} e^{r_1 |\eta_1|},\\
|g(z)| &\leq C_2 \langle z \rangle^{N_2} e^{r_2 |\Im(z)|}.
\end{aligned}    
\end{equation}
for $z \in \mathbb{C}^d$. Applying definition \eqref{minmax} to $z_1 \mapsto g(z_1;\xi_1^\prime)$, obtain
\begin{equation*}
    M(r) \leq C_2 (1+r+\sum_{j=2}^d |\xi_j|)^{N_2} e^{r_2 r},
\end{equation*}
for every $r>0$. By \Cref{lb entire}, there exists $a>0$ and a positive sequence $\rho_n \xrightarrow[n\to\infty]{}\infty$ such that
\begin{equation*}
    m(\rho_n) > M(\rho_n)^{-a} \geq C_2^{-a}(1+\rho_n+\sum_{j=2}^d |\xi_j|)^{-aN_2}e^{-ar_2 \rho_n}, 
\end{equation*}
which, combined with the first inequality of \eqref{PWS2} yields
\begin{equation}\label{PWS3}
e^{tv(z_1)} \leq C_1C_2^{a} (1+\rho_n+\sum_{j=2}^d |\xi_j|)^{N_1+aN_2} e^{(r_1+ar_2)\rho_n}, 
\end{equation}
for all $|z_1| = \rho_n,\ n \in \mathbb{N}$. By \Cref{lm}, $\omega(z_j;\xi_j^\prime)$ is a linear polynomial in $z_j$ for $1 \leq j \leq d$. Furthermore $\omega$ is of the form
\begin{equation}\label{homogeneous2}
\omega(z) = \sum_{\boldsymbol\alpha_j \in \{0,1\},\ 1 \leq j \leq d} a_{\boldsymbol\alpha} z^{\boldsymbol\alpha}   
\end{equation}
for $a_{\boldsymbol\alpha} \in \mathbb{R}$ by \Cref{poly2}.

If $\rho \in [0,\frac{1}{2})$, another application of \Cref{lb entire} yields $m(\rho_n) > M(\rho_n)^{a}$ for some $a>0$ and $\rho_n \xrightarrow[n\rightarrow \infty]{} \infty$. By the maximum modulus principle, there exists a subsequence of $\{\rho_n\}$ and $\delta > 0$ such that, without relabeling, $M(\rho_n) \geq \delta$; otherwise, $g$ is constant by the Liouville's Theorem (see \Cref{rmk_liouville}). Reasoning as \eqref{PWS3}, we have
\begin{equation*}
    e^{t v(z_1)} \leq \frac{C_1}{\delta^a} (1+\rho_n + \sum_{j=2}^d |\xi_j|)^{N_1} e^{r_1 \rho_n},
\end{equation*}
along the radii $|z_1| = \rho_n$, and hence \eqref{homogeneous2} by \Cref{poly2}.

To show $| \boldsymbol\alpha| \leq 1$ in each summand of \eqref{homogeneous2}, we argue by contradiction. Suppose there exists a multi-index $\boldsymbol\alpha$ such that $\boldsymbol\alpha_{j_1}=\boldsymbol\alpha_{j_2}=1$ for some $j_1 \neq j_2$. Then there exists $\{\lambda_j\}_{j=2}^d$ for $\lambda_j \in \mathbb{R}$ such that the function $\Tilde{\omega}(z_1) = \omega(z_1,\lambda_2 z_1,\dots,\lambda_d z_1)$ is a polynomial in $z_1$ of degree $2 \leq k \leq d$. Then
\begin{equation}\label{poly3}
\Im \Tilde{\omega}(z_1) = \sum_{n+m\leq k,\ n,m \geq 0} c_{n,m} \xi_1^n \eta_1^m,\ c_{n,m}\in\mathbb{R},   
\end{equation}
where there exists $n_0,m_0 \geq 0$ such that $n_0 + m_0 = k,\ c_{n_0,m_0}\neq 0$. 

Applying \Cref{PWS} and \Cref{lb entire} on $z_1 \mapsto \Tilde{g}(z_1):=g(z_1,\lambda_2z_1,\dots,\lambda_d z_1)$, we have
\begin{equation*}
    |\Tilde{g}(z_1)|^{-1} \leq A e^{B|\eta_1|},\: A,B>0,
\end{equation*}
for all $|z_1| = \rho_n$ for some positive sequence $\rho_n$ increasing to infinity. For all $|z_1| = \rho_n$, we argue as \eqref{PWS3} to obtain
\begin{equation*}
    e^{t \Im \Tilde{\omega}(z_1)} \leq \Tilde{A} e^{\Tilde{B}|\eta_1|},\: \Tilde{A},\Tilde{B}>0.
\end{equation*}

By a further inspection of the homogeneous polynomial of degree $k$ in \eqref{poly3}, there exists $\lambda \in \mathbb{R}$ such that $t\Im \Tilde{\omega}(\xi_1,\lambda \xi_1)$ defines a polynomial in $\xi_1$ of degree $k$ that diverges to $+\infty$ as $\xi_1 \rightarrow \infty$ (or $\xi_1 \rightarrow -\infty$). Taking this limit restricted to $\{|z_1| = \rho_n\}$ for all $n\in\mathbb{N}$, it is shown that $k \leq 1$. 
\end{proof}

\begin{remark}\label{rmk_liouville}
Regurgitating the proof of \Cref{thm} with $\phi = \delta$ readily yields the proof of \Cref{fundamental_cpt}, manifesting the dispersive nature of \eqref{FSE2} via the instant loss of compact support of the fundamental solution.    
\end{remark}

\section{Unique continuation in the multi-dimensional nonlinear regime}\label{nonlinearization}

In this section, $B>0$ is fixed. Given $z \in \mathbb{C}^d$, denote $z = \xi + i \eta = \Re z + i \Im z$ and $|z| := \max\limits_{1 \leq j \leq d} |z_j|$. Define
\begin{equation}\label{setE}
\begin{split}
    u^*(\xi) &= \sup_{t \in I} |\widehat{u(t)}(\xi)|;\ a(\xi) = \sup_{|\xi^\prime| \geq |\xi|} u^*(\xi^\prime),\\
    E_{Q,c} &= \{\xi \in \mathbb{R}^d: a(\xi) > c \underbrace{a \ast \cdots \ast a}_{k} (\xi),\ a(\xi) > e^{-\frac{|\xi|}{Q}},\ a(\xi) = u^*(\xi)\},  
\end{split}
\end{equation}
where $u \in C([-T,T];H^s(\mathbb{R}^d))$ satisfies $\sup\limits_{t \in I} \| u(t) \|_{H^s} \leq M < \infty$ with $0 < T < \infty$ and
\begin{equation}\label{duhamel}
        u(t_2) = U(t_2 - t_1)u(t_1) - i \int_{t_1}^{t_2} U(t_2 - \tau) N[u](\tau) d\tau
\end{equation}
for some $s>d$, for any $t_1, t_2 \in [-T,T]$, and $N[u] = u^k$ (or any k-fold product for $k \geq 2$ involving complex conjugates). By definition, $a$ is decreasing in $|\xi|$ and
\begin{equation*}
u^*(\xi),\ a(\xi) \lesssim M B^d \langle \xi \rangle^{-s} \text{ for all } \xi \in \mathbb{R}^d.  
\end{equation*}
Observe that for any $\xi \in \mathbb{R}^d$, there exists $t_1 = t_1(\xi) \in [-T,T]$ such that $u^*(\xi) = |\widehat{u(t_1)}(\xi)|$. Then let $t_2 \in [-T,T]$ such that $|\Delta t| := |t_2 - t_1| \geq \epsilon_0 T$ for $\epsilon_0 \ll 1$. Denote $\mu = sgn(\Delta t) \in \{\pm 1\}$.

\begin{theorem}\label{thm_nonlinear}
Consider an entire function $\omega$ on $\mathbb{C}^d$ that is real-valued when restricted to $\mathbb{R}^d$. Let $u \in C([-T,T];H^s(\mathbb{R}^d))$ satisfy \eqref{duhamel} and $\sup\limits_{t \in I} \| u(t) \|_{H^s} \leq M$, and assume $supp (u(t)) \subseteq \{x \in \mathbb{R}^d: |x| < B\}$ for all $t \in [-T,T]$. Let $Q \gg B$. Suppose there exists a sequence $\{\xi_n\} \subseteq E_{Q,c}$ where for each $\xi_n$, there exists $\eta_n \in \mathbb{R}^d$ with $|\eta_n| \simeq |\xi_n|^{-1}$ such that
\begin{equation}\label{growth}
    (t_2(\xi_n) - t_1(\xi_n)) \Im \omega(\xi_n + i \eta_n) >0,\ |\Im \omega(\xi_n + i \eta_n)| \gtrsim \frac{|\xi_n|}{T}  
\end{equation}
for all sufficiently large $n$ as $|\xi_n| \xrightarrow[n \rightarrow \infty]{} \infty$. Then $u \equiv 0$ on $[-T,T] \times \mathbb{R}^d$.   
\end{theorem}

Indeed $E_{Q,c}$ contains an unbounded sequence.

\begin{lemma}\label{l:3.1}
    Let $Q > 0$. Then there exists $c > 0$, independent of $Q$, such that $E_{Q,c}$ is unbounded in $\mathbb{R}^d$.
\end{lemma}

Proofs of \Cref{l:3.1} in \cite{bourgain1997compactness,carvajal2005unique,panthee2005unique} assumed that $u(x,0)$ is sufficiently smooth such that $|a(\xi)| \lesssim \langle \xi \rangle^{-4}$ in $d = 1,2$. For a general dimension, it suffices to have $|a(\xi)| \lesssim \langle \xi \rangle^{-s}$ for any $s > d$. Then $a \in L^1(\mathbb{R}^d)$ from which the argument follows similarly as (1.10) of \cite{bourgain1997compactness}. Likewise the following lemma can be shown similarly as \cite{bourgain1997compactness,carvajal2005unique,panthee2005unique}. More precisely, \cite{panthee2005unique} extended the derivative estimate of single-variate entire functions to entire functions of two complex variables by iterating the proof in \cite{bourgain1997compactness} for each variable while fixing the rest. This technique generalizes to any dimension.

\begin{lemma}\label{l:3.2}
    Let $\Phi:\mathbb{C}^d \rightarrow \mathbb{C}$ be entire such that $|\Phi(z)| \lesssim e^{C B |\Im z|}$ for all $z \in \mathbb{C}^d$ for some $C>0$ and assume $z_j \mapsto \Phi(z_j;z^\prime_j)$ is bounded and absolutely integrable on the real line for all $1 \leq j \leq d$ (see \eqref{inclusion} for notation). If    \begin{equation}\label{l:3.2_hypothesis}
        |\eta| \leq B^{-1} \left( 1 + |\log \sup_{|\xi^\prime| \geq |\xi|} |\Phi(\xi^\prime)||\right)^{-1},    
    \end{equation}
    then
    \begin{equation*}
        \sup_{|\xi^\prime| \geq |\xi|} |\nabla \Phi (\xi^\prime + i \eta)| \lesssim_d B \sup_{|\xi^\prime| \geq |\xi|} | \Phi (\xi^\prime)| \left( 1 + |\log \sup_{|\xi^\prime| \geq |\xi|} |\Phi(\xi^\prime)||\right).
    \end{equation*}
\end{lemma}

\begin{proof}[Proof of \Cref{thm_nonlinear}]

Let $\xi \in E_{Q,c}$ and choose $\eta$ such that \eqref{growth} is satisfied. Then we have $t_1 \in [-T,T]$ such that $|\widehat{u(t_1)}(\xi)| = u^*(\xi) = a(\xi)$. Take the Fourier transform of \eqref{duhamel} and change variables $\tau \mapsto \tau + t_1$ to obtain
\begin{equation}\label{duhamel2}
    e^{i \Delta t \omega(z)} \widehat{u(t_2)}(z) = \widehat{u(t_1)}(z) - i \int_0^{\Delta t} e^{i \tau \omega(z)} \widehat{N[u](t_1 + \tau)}(z) d\tau,
\end{equation}
where the entire extension $\xi \mapsto z = \xi + i \eta$ is by \Cref{PWS}. The LHS of \eqref{duhamel2} is estimated as
\begin{equation}\label{duhamel_upperbound}
    \left|e^{i \Delta t \omega(z)} \widehat{u(t_2)}(z)\right| \leq e^{-\Delta t \Im \omega(z)} \int_{|x| \leq B} |u(t_2,x)| e^{B |\eta|} dx \lesssim_{d} MB^d e^{B |\eta| - \Delta t \Im \omega(z)},
\end{equation}
by the compactness of $supp(u(t_2))$. The RHS is estimated from below by
\begin{equation*}
\begin{split}
    \left|\widehat{u(t_1)}(z) - i \int_0^{\Delta t} e^{i \tau \omega(z)} \widehat{N[u](t_1 + \tau)}(z)  
    d\tau \right| &\geq |\widehat{u(t_1)}(\xi)| - \int_0^{|\Delta t|} e^{-\mu \tau \Im \omega(z)} \left| \widehat{N[u](t_1 + \mu \tau)} (\xi) \right| d\tau\\
    &- |\widehat{u(t_1)}(z) - \widehat{u(t_1)}(\xi)|\\
    &- \int_0^{|\Delta t|} e^{-\mu\tau \Im \omega(z)} \left| \widehat{N[u](t_1 + \mu \tau)} (z)-\widehat{N[u](t_1 + \mu \tau)} (\xi) \right| d\tau\\
    &=: I - II - III.    
\end{split}
\end{equation*}
Since
\begin{equation*}
    \int_0^{|\Delta t|} e^{-\mu\tau \Im \omega(z)} \left| \widehat{N[u](t_1 + \mu \tau)} (\xi) \right| d\tau \leq \int_0^{|\Delta t|} e^{-\mu\tau \Im \omega(z)} a \ast \cdots \ast a (\xi) d\tau < \frac{1-e^{-\mu |\Delta t| \Im \omega(z)}}{c \mu \Im \omega(z)} a(\xi),
\end{equation*}
we have
\begin{equation}\label{estimate:I}
    I > \left( 1 - \frac{1-e^{-\mu|\Delta t| \Im \omega(z)}}{c \mu \Im \omega(z)}\right) a(\xi).
\end{equation}
Let $\Phi(z) = \widehat{u(t_1)}(z)$. Then,
\begin{equation}\label{estimate:II}
    II = |\Phi(\xi + i \eta) - \Phi(\xi)| \leq |\eta| \sup_{|\eta^\prime| \leq |\eta|} |\nabla \Phi (\xi + i \eta^\prime)| \lesssim a(\xi),
\end{equation}
where the last inequality follows from \Cref{l:3.2}. Since
\begin{equation*}
    1 + |\log \sup_{|\xi^\prime| \geq |\xi|} |\Phi(\xi^\prime)|| \leq 1 + |\log a(\xi)| \leq 1 + \frac{|\xi|}{Q},
\end{equation*}
the estimate \eqref{l:3.2_hypothesis} is justified for $Q > 2B$. The estimation of the last term follows similarly as
\begin{align}\label{estimate:III}
    III &\leq \int_0^{|\Delta t|} e^{-\mu \tau \Im \omega(z)} |\eta| \biggl\{\int \sup_{|\eta^\prime| \leq |\eta|} |\nabla \Phi (\xi - \xi_1 -\cdots - \xi_{k-1} + i \eta^\prime)| |\Phi(\xi_1) \cdots \Phi(\xi_{k-1})| d\xi_1 \cdots d\xi_{k-1} \biggl\} d\tau\nonumber\\
    &\lesssim_k \int_0^{|\Delta t|} e^{-\mu \tau \Im \omega(z)} a(\xi) d\tau = \frac{1-e^{-\mu |\Delta t| \Im \omega(z)}}{c\mu \Im \omega(z)} a(\xi),
\end{align}
where the details leading up to \eqref{estimate:III} could be found in \cite{bourgain1997compactness,carvajal2005unique,panthee2005unique}. Combining \eqref{duhamel_upperbound}, \eqref{estimate:I}, \eqref{estimate:II}, and \eqref{estimate:III}, we have
\begin{equation}\label{exp_decay}
    2 e^{-C_0 |\xi|} > 2e^{- \Delta t \Im \omega(z)} > \frac{a(\xi)}{4} > \frac{e^{-\frac{|\xi|}{Q}}}{4},
\end{equation}
for some $C_0 > 0$ independent of $Q$. Take $Q \gg \max (B,C_0^{-1})$ and $|\xi|$ arbitrarily large by \eqref{growth} to derive a contradiction unless $u \equiv 0$.
\end{proof}

\begin{remark}\label{rmk:3.1}
    The growth condition of $\omega$ \eqref{growth} yields the exponential decay of the LHS of \eqref{exp_decay}, which gives the desired trivial solution in the proof. If $\omega$ is a polynomial of degree $n$, then the technique of this section applies if and only if $n \geq 3$, as can be justified by the linear expansion, 
    \begin{equation*}
        |\Im \omega(\xi + i \eta)| \simeq |\nabla \omega(\xi)\eta| \simeq |\xi|^{n-1}|\eta| \simeq |\xi|^{n-2},
    \end{equation*}
    for $|\xi| |\eta| \simeq 1$. Hence \eqref{growth} holds if and only if $n \geq 3$. Bourgain's argument in \cite[Equation (3.11), p.447]{bourgain1997compactness} depended heavily on $n=3$ (generalized KdV).  
\end{remark}

\section{Full domain as the support}\label{full_domain}

Here we restrict attention to the dynamics on the real line. Our goal is to show
\begin{equation}\label{full}
    supp(U(t)\phi) = \mathbb{R}\ \text{for all}\ \phi \in \mathcal{E}^\prime(\mathbb{R}) \setminus \{0\}.
\end{equation}

Note that this is a stronger statement than the non-compactness of $supp(U(t)\phi)$. It is of interest to fully classify the dispersion relations that satisfy \eqref{full}. It is of further interest to study an analogous question for a more general spatial domain.

It suffices to show the real-analyticity of $K_t = \mathcal{F}^{-1}[e^{-it\omega}]$ though such strong property is not necessary as can be seen in the first example of \eqref{half-wave2} where $\omega(\xi) = |\xi|$. Real-analytic functions can be locally extended to holomorphic functions defined in a region in $\mathbb{C}$, and therefore if the real zeros of a real-analytic function accumulate, then the function must be identically zero. Hence if $K_t$ is real-analytic, then $supp(K_t)=\mathbb{R}$. Moreover if $\phi \in \mathcal{E}^\prime$, then $K_t \ast \phi \in \mathcal{S}^\prime$ is real-analytic, and hence \eqref{full}. A useful tool in understanding the microlocal properties of a distribution is the following wave packet transform; see \cite{folland2016harmonic}.

\begin{definition}
Let $\tau \geq 0,\ x,\xi\in\mathbb{R}$. The FBI transform of $f$ is
\begin{equation*}
    P^\tau f(x,\xi) = \int_\mathbb{R} e^{-iy\xi}e^{-\tau(y-x)^2}f(y)dy.
\end{equation*}
\end{definition}

By direct computation,
\begin{equation*}
    P^\tau f(x,\xi) = \sqrt{\frac{\pi}{\tau}}\int \widehat{f}(\xi-z)e^{-ixz}e^{-\frac{z^2}{4\tau}}dz.
\end{equation*}
If $\widehat{f}(\xi) = \mathcal{F}[K_t](\xi) = e^{-itw(\xi)}$, then
\begin{equation}\label{RA}
\begin{aligned}
P^\tau f(x,\tau\xi) &= \sqrt{\pi} \int e^{-i(tw(\tau \xi - \sqrt{\tau}z) + \sqrt{\tau}xz)}e^{-\frac{z^2}{4}}dz\\
&=:\sqrt{\pi}I(\sqrt{\tau}).
\end{aligned}
\end{equation}

\begin{lemma}[{\cite[Theorem 3.31]{folland2016harmonic}}]
Consider $I(\lambda)$ in \eqref{RA} where $\lambda = \sqrt{\tau}$. Given $x_0 \in \mathbb{R},\ t \in \mathbb{R}\setminus\{0\}$, suppose there exist a neighborhood of $x_0$, say $U \subseteq \mathbb{R}$, and constants $C,\sigma,M,\lambda_0>0$ such that
\begin{equation}\label{FBI}
    |I(\lambda)| \leq C e^{-\sigma \lambda^2},
\end{equation}
for all $|\xi|>M,\ x \in U,\ \lambda>\lambda_0$. Then $K_t \in \mathcal{S}^\prime(\mathbb{R})$ is real-analytic at $x_0$ and conversely.
\end{lemma}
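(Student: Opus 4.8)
The plan is to read \eqref{FBI} as a Gaussian-decay condition on the FBI transform and to pass between such decay and holomorphic extension of $K_t$ by means of an inversion formula for $P^\tau$. Since $P^\tau f(x,\tau\xi)=\sqrt{\pi}\,I(\sqrt{\tau})$ with $\tau=\lambda^2$ from \eqref{RA}, the hypothesis is equivalent to $|P^\tau f(x,\tau\xi)|\le C\sqrt{\pi}\,e^{-\sigma\tau}$ uniformly for $x\in U$, $|\xi|>M$, and $\tau>\lambda_0^2$. Both implications are then obtained by deforming the spatial integration contour into $\mathbb{C}$, the Gaussian window $e^{-\tau(y-x)^2}$ furnishing the localization that makes the deformation convergent.

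For the forward direction (decay $\Rightarrow$ analyticity), I would begin from an inversion formula expressing $K_t$ through its transform, schematically $K_t(y)=c\iint P^\tau K_t(x,\xi)\,e^{iy\xi}e^{-\tau(y-x)^2}\,dx\,d\xi$, and continue it in the base point $y\mapsto y+iv$ for $v$ near $0$ and $y$ near $x_0$. Complexifying the window yields growth of order $e^{\tau|v|^2}$, which is dominated by the decay $e^{-\sigma\tau}$ whenever $|v|^2<\sigma$. Splitting the frequency integral into the compact part $|\xi|\le M$, whose contribution is entire in $y$ by differentiation under the integral sign, and the conic part $|\xi|>M$, where \eqref{FBI} applies, one gets absolute convergence of the complexified integral for $y$ in a disk of radius $\sim\sqrt{\sigma}$ about $x_0$. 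This produces a holomorphic extension of $K_t$ to that disk, hence real-analyticity at $x_0$.

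For the converse, suppose $K_t$ extends holomorphically to a complex neighborhood of $x_0$. I would substitute this extension into $I(\lambda)$ and deform the $z$-contour along the direction of steepest descent of the exponent $-i\bigl(tw(\tau\xi-\sqrt{\tau}z)+\sqrt{\tau}xz\bigr)-z^2/4$ from \eqref{RA}. Because the saddle is controlled by the quadratic window, the shifted contour contributes a factor $e^{-\sigma\lambda^2}$ for suitable $\sigma>0$, while the cutoff $|\xi|>M$ keeps the stationary point inside the region of holomorphy and keeps the phase non-degenerate; the method of steepest descent then delivers \eqref{FBI} uniformly on $U$.

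The step I expect to be the main obstacle is \emph{uniformity}. The contour shifts and steepest-descent estimates must hold with one pair of constants $(C,\sigma)$ across the whole cone $|\xi|>M$ and the entire neighborhood $U$, not merely pointwise. Controlling the complexified tails --- where the growth $e^{\tau|v|^2}$ from continuing the window competes with the frequency oscillation and with the behavior of $w$ --- and ruling out degeneration of the decay rate as $|\xi|\to\infty$ is exactly what upgrades microlocal decay into genuine real-analyticity at $x_0$ rather than mere smoothness.
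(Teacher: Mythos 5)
First, a point of comparison: the paper does not prove this lemma at all --- it is imported verbatim from Folland (the citation is the proof), so your attempt must be measured against the standard proof of the FBI characterization of real-analyticity. Against that standard, your forward direction has a genuine gap: it never exploits the coupling between the window width $\tau$ and the frequency, which is the heart of the FBI method. In the unscaled frequency variable $\eta=\tau\xi$, the hypothesis \eqref{FBI} says $|P^\tau K_t(x,\eta)|\le Ce^{-\sigma\tau}$ for $x\in U$ and $|\eta|>M\tau$; at \emph{fixed} $\tau$ this bound is uniform in $\eta$ but has no decay whatsoever as $|\eta|\to\infty$, so your fixed-$\tau$ inversion integral does not converge absolutely even for real $y$ (the bound gives $\int_{|\eta|>M\tau}Ce^{-\sigma\tau}\,d\eta=\infty$), let alone after complexification, where the neglected factor $|e^{i(y+iv)\eta}|=e^{-v\eta}$ grows exponentially in $|\eta|$. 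Your comparison ``$e^{\tau|v|^2}$ versus $e^{-\sigma\tau}$'' accounts only for the complexified window, not for this frequency factor, and the a priori polynomial bounds coming from $K_t\in\mathcal{S}'$ cannot close the gap by interpolation. The standard proof fixes exactly this point by applying the hypothesis along the coupled regime $\tau\simeq|\eta|/M$ (legitimate, since \eqref{FBI} holds for \emph{all} $|\xi|>M$, $\tau>\lambda_0^2$), which converts $e^{-\sigma\tau}$ into genuine exponential decay $e^{-\sigma|\eta|/M}$ in frequency; one must then invert the frequency-coupled (Bros--Iagolnitzer) transform rather than the fixed-$\tau$ transform, and handling that inversion is where the real work lies. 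Relatedly, your split ``compact part $|\xi|\le M$ / conic part $|\xi|>M$'' conflates the scaled variable $\xi$ with the frequency $\eta=\tau\xi$ that actually appears in the inversion formula: the uncontrolled region is $|\eta|\le M\tau$, which is not compact uniformly in $\tau$.

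Your converse direction is also off track. The integral $I(\lambda)$ in \eqref{RA} is written entirely on the Fourier side: its phase involves the dispersion relation $w$, and the assumed holomorphic extension of $K_t$ near $x_0$ simply does not appear in that formula, so there is nothing to ``substitute.'' Deforming the $z$-contour there would require an analytic continuation of $w$, which is not part of the hypothesis --- indeed, whether $w$ admits such an extension is precisely the delicate question of the paper's Section 2, and steepest descent on the $w$-phase is what the paper's subsequent proposition does to \emph{verify} \eqref{FBI} for polynomial $w$, not how the abstract converse is proven. The correct converse works on the physical side: write $P^\tau K_t(x,\tau\xi)=\langle K_t,\,e^{-i(\cdot)\tau\xi}e^{-\tau(\cdot-x)^2}\rangle$, split $K_t$ by a cutoff supported in the set where the holomorphic extension exists, shift the $y$-contour to $y-ic\,\mathrm{sgn}(\xi)$ for the analytic piece (gaining $e^{-c\tau|\xi|}e^{c^2\tau}\le e^{-\tau(cM-c^2)}$ for $|\xi|>M$), and control the far piece by the Gaussian window, which contributes $O(e^{-\tau(\delta^2-c^2)})$ up to polynomial factors coming from the order of the distribution; choosing $c<\min(\delta,M)$ yields \eqref{FBI}, and the uniformity in $x\in U$ and $|\xi|>M$ that you flag as the main obstacle comes for free from this argument.
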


For all $\omega(\xi)$ that satisfies \eqref{FBI} for any $x_0 \in \mathbb{R}$, \eqref{full} holds. For now, we content ourselves with showing \eqref{FBI} for polynomial dispersion relations by applying the method of steepest descent.
\begin{proposition}
For every polynomial $\omega$ of degree $\geq 2$, $I(\lambda)$ satisfies \eqref{FBI} for every $x_0 \in\mathbb{R},\ t \in \mathbb{R}\setminus\{0\}$.
\end{proposition}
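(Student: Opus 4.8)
The plan is to treat $I(\lambda)$ as an oscillatory integral with large parameter $\lambda=\sqrt{\tau}$ and apply the method of steepest descent, exploiting that the full exponent
\[
g(z) = -i t\,w(\lambda^{2}\xi-\lambda z)-i\lambda x z-\tfrac14 z^{2}
\]
is entire (a polynomial in $z$ of degree $n=\deg w\ge 2$). On the real axis $\Re g(z)=-z^{2}/4$, since $w$ is real-valued there, so the integral converges absolutely but the modulus bound alone gives no decay in $\lambda$; the required Gaussian decay must come from cancellation, which I extract by deforming the contour into $\mathbb{C}$.

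To see where the decay originates, I would first record the heuristic substitution $u=\lambda^{2}\xi-\lambda z$, under which
\[
I(\lambda)=\frac{1}{\lambda}\,e^{-i\lambda^{2}x\xi}\,e^{-\lambda^{2}\xi^{2}/4}\int_{\mathbb{R}} e^{-itw(u)+ixu+\xi u/2-u^{2}/(4\lambda^{2})}\,du.
\]
This exposes the factor $e^{-\lambda^{2}\xi^{2}/4}$, exactly the sought decay with $\sigma=\xi^{2}/4\ge M^{2}/4$ for $|\xi|>M$. Equivalently, the saddle of $g$ solves $g'(z_{c})=0$, i.e. $w'(\lambda^{2}\xi-\lambda z_{c})=(x-i\xi/2)/t+O(\lambda^{-2})$; since $w'$ is a nonconstant polynomial this forces $\lambda^{2}\xi-\lambda z_{c}=O(1)$, hence $z_{c}\approx \lambda\xi$ with a small imaginary part, and the Gaussian weight $e^{-z_{c}^{2}/4}\approx e^{-\lambda^{2}\xi^{2}/4}$ is what produces the decay. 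A short computation then gives $\Re g(z_{c})=-\lambda^{2}\xi^{2}/4+O(1)$, uniformly for $x\in U$ and $|\xi|>M$.

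The rigorous step is to deform $\mathbb{R}$ to a steepest-descent contour $\Gamma$ through $z_{c}$ and bound $|I(\lambda)|\le \int_{\Gamma}|e^{g}|\,|dz|$, where I arrange $\Re g\le \Re g(z_{c})\le -\sigma\lambda^{2}$ on $\Gamma$. Because $g$ is entire, Cauchy's theorem licenses the deformation provided the connecting arcs at infinity are negligible. For $n=2$ this is transparent: the exponent is genuinely Gaussian, so a single imaginary shift $z\mapsto z+ic$ with $c=O(\lambda^{-1})$ followed by completing the square in the real variable already yields $\sup_{\Im z=c}\Re g=-\lambda^{2}\xi^{2}/4+O(1)$, and the vertical segments vanish because $\Re g\to-\infty$ along the bounded strip.

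I expect the \emph{main obstacle} to be the contour at infinity when $n\ge 3$. There the leading term of $g$ is $-it a_{n}(-\lambda)^{n}z^{n}=-i\rho z^{n}$ with $\rho\in\mathbb{R}$, so on $z=Re^{i\theta}$ one has $\Re g\approx \rho R^{n}\sin(n\theta)$; this dominates the Gaussian $-z^{2}/4$ and creates $n$ alternating valleys and hills, with the real axis lying on the separating Stokes rays $\theta=0,\pi$. Consequently a naive horizontal shift diverges, and $\Gamma$ must instead be routed so that both ends descend into adjacent valley sectors while still threading the saddle $z_{c}\approx\lambda\xi$. The delicate points will be (i) constructing such a $\Gamma$ and showing the arcs at infinity contribute nothing, using the Gaussian together with the sign of $\sin(n\theta)$ in the chosen sectors, and (ii) verifying that the bound $\Re g|_{\Gamma}\le-\sigma\lambda^{2}$ and the implied constants are uniform in $x\in U$ and $|\xi|>M$. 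Once the dominant saddle is isolated and $\Gamma$ is controlled at infinity, the estimate $|I(\lambda)|\le Ce^{-\sigma\lambda^{2}}$ follows, and with it \eqref{FBI}.
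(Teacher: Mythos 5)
Your overall strategy --- steepest descent with the saddle pinned near $z\approx\lambda\xi$ so that the Gaussian weight supplies the factor $e^{-\lambda^{2}\xi^{2}/4}$ --- is the paper's strategy, but your write-up contains two substantive errors and, more importantly, defers the step that constitutes the actual proof. First, the ``heuristic'' substitution shows nothing: completing the square gives $e^{\xi u/2-u^{2}/(4\lambda^{2})}=e^{\lambda^{2}\xi^{2}/4}\,e^{-(u-\lambda^{2}\xi)^{2}/(4\lambda^{2})}$, so the remaining integral carries a compensating factor of size up to $\lambda e^{\lambda^{2}\xi^{2}/4}$ that exactly cancels the decay you claim to have ``exposed''; the trivial bound just returns $|I(\lambda)|=O(1)$, and all of the decay must still be extracted from cancellation --- which is precisely what remains to be proved. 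Second, your saddle localization is not uniform in $\xi$: because you take the saddle of the full exponent $g$ (Gaussian included), the critical equation is $w'(\lambda^{2}\xi-\lambda z_{c})=x/t-iz_{c}/(2t\lambda)$, whose right-hand side grows linearly in $|\xi|$ once $z_{c}\approx\lambda\xi$; hence, with $N=\deg w$, one gets $\lambda^{2}\xi-\lambda z_{c}=O(|\xi|^{1/(N-1)})$, not $O(1)$, and $\Re g(z_{c})=-\lambda^{2}\xi^{2}/4+O(|\xi|^{N/(N-1)})$ rather than $-\lambda^{2}\xi^{2}/4+O(1)$. The conclusion $\Re g(z_{c})\le-\sigma\lambda^{2}$ can be rescued for $\lambda>\lambda_{0}$, but the uniform claims as you state them are false. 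The paper avoids this issue by splitting the exponent as $\lambda\Phi(z)-z^{2}/4$ and taking saddles of $\Phi$ alone, so the saddle equation becomes $w'(\lambda^{2}\xi-\lambda z)=x/t$ with $\xi$-independent right-hand side, and Rouch\'e's theorem, via \eqref{Rouche}, places every saddle at $z_{0}=\lambda\xi+re^{i\phi}/\lambda$ with $r<c$ uniformly for $|x-x_{0}|<\delta$.

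The decisive gap is exactly the pair of items (i)--(ii) in your final paragraph, which you label as expected obstacles and never resolve: for $\deg w\ge 3$ you do not construct $\Gamma$, do not show the arcs at infinity are negligible, and do not establish $\Re g\le-\sigma\lambda^{2}$ along $\Gamma$ with constants uniform in $x\in U$ and $|\xi|>M$. That construction \emph{is} the proof; everything before it is set-up. The paper carries it out concretely: after the Rouch\'e localization it expands $\Phi$ at $z_{0}$ with first nonvanishing derivative of order $m$, chooses the approximate constant-phase rays $z=z_{0}+\rho e^{i\theta_{j}}$ with $\theta_{j}$ as in \eqref{contour3}--\eqref{contour4} so that $\Re\bigl(\tfrac{\Phi^{(m)}(z_{0})}{m!}(z-z_{0})^{m}\bigr)=-\tfrac{|\Phi^{(m)}(z_{0})|}{m!}\rho^{m}$, treats even and odd $m$ separately (for odd $m$ the two rays are not collinear, giving \eqref{steep2} instead of \eqref{steep}), deforms by Cauchy's theorem, and then verifies term by term that $|e^{\lambda\Phi(z_{0})}|$ and $|\lambda\Phi^{(m)}(z_{0})|^{-1/m}$ stay $O(1)$ while $\Re(z_{0}^{2})=\lambda^{2}\xi^{2}+2r\cos(\phi)\xi+r^{2}\cos(2\phi)/\lambda^{2}\ge\sigma\lambda^{2}$ for $|\xi|\ge 1$ and $\lambda>\lambda_{0}$, so that the entire decay comes from $|e^{-z_{0}^{2}/4}|\le e^{-\sigma\lambda^{2}}$. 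Until you supply an equivalent explicit contour (including the routing into valley sectors you describe) together with these uniform bounds, your proposal is a plan for a proof rather than a proof.
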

\begin{proof}
Suppose
\begin{equation*}
    \omega(z) = a_N z^N + \sum\limits_{n=0}^{N-1}a_n z^n=:a_N z^N + \omega_1(z),
\end{equation*}
where $N \geq 2,\ a_N \neq 0,\ a_n\in\mathbb{R}$ and $z\in\mathbb{C}$. Assume $a_N=1$ and $t>0$ without loss of generality. Observe that
\begin{equation*}
I(\lambda) = \int_{-\infty}^\infty e^{\lambda \Phi(z)}e^{-\frac{z^2}{4}}dz,\ \Phi(z) = -\frac{i}{\lambda}\left(tw(\lambda^2\xi - \lambda z)+\lambda x z\right),
\end{equation*}
where $|x-x_0|< \delta$ for some $\delta>0$ to be determined. A saddle point $z_0 \in \mathbb{C}$ of $\Phi$ is a root of $\Phi^\prime(z)=0$, which is equivalent to
\begin{equation*}
\omega^\prime(\Tilde{z}) - \frac{x}{t} = f(\Tilde{z})+g(\Tilde{z})=0,    
\end{equation*}
where $\prime=\frac{d}{dz},\ \Tilde{z} = \lambda^2\xi - \lambda z$ and
\begin{equation*}
f(\Tilde{z}) = N \Tilde{z}^{N-1} - \frac{x}{t},\ g(\Tilde{z}) = \omega_1^\prime(\Tilde{z}).    
\end{equation*}

The roots of $f$ are given by
\begin{equation}\label{roots of u}
\begin{aligned}
\Tilde{z} &= \left(\frac{|x|}{Nt}\right)^{\frac{1}{N-1}}e^{i\left(\phi_x + \frac{2\pi j}{N-1}\right)},\ j=0,1,\dots N-2,\\
\phi_x &= \begin{cases}
0, & x\geq 0\\
\pi, & x<0.
\end{cases}
\end{aligned}
\end{equation}
Consider $\{z\in\mathbb{C}: |z-\lambda \xi| = \frac{c}{\lambda}\} = \{\Tilde{z} \in \mathbb{C}: |\Tilde{z}| = c\}$ where $c = c(x_0,t,w)> \left(\frac{|x_0|}{Nt}\right)^{\frac{1}{N-1}}$ satisfies
\begin{equation}\label{Rouche}
    \sum_{n=1}^{N-1} n |a_n| c^{n-1} \leq \frac{N}{2}c^{N-1} - \frac{|x|}{2t},
\end{equation}
for all $|x-x_0| < \delta$ for some $\delta = \delta(c)>0$. By shrinking $\delta$, if necessary, further assume
\begin{equation}\label{roots of u2}
c>\left(\frac{|x|}{Nt}\right)^{\frac{1}{N-1}}    
\end{equation}
for all $|x-x_0|<\delta$. Then
\begin{equation*}
|g(\Tilde{z})|\leq \frac{|f(\Tilde{z})|}{2}<|f(\Tilde{z})|,    
\end{equation*}
for all $|\Tilde{z}|=c$ by \eqref{Rouche}. Since $f$ has all complex roots in the disk $|z-\lambda \xi| < \frac{c}{\lambda}$ by \eqref{roots of u}, \eqref{roots of u2}, it follows that $f+g$ has $N-1$ roots, counting multiplicities, in the same region by the Rouch\'e's Theorem. Hence a saddle point has the form
\begin{equation*}
    z_0 = \lambda\xi + \frac{r}{\lambda}e^{i\phi},
\end{equation*}
for some $0\leq r<c$ and $\phi \in [0,2\pi)$.

By our hypothesis on $\omega$, there exists $m \geq 2$ such that
\begin{equation*}
    \Phi(z) = \Phi(z_0) + \sum_{n=m}^N \frac{\Phi^{(n)}(z_0)}{n!}(z-z_0)^n.
\end{equation*}
To obtain the dominant term of the asymptotics of $I(\lambda)$ as $\lambda \rightarrow \infty$, consider an approximate constant-phase contour
\begin{equation*}
    \gamma=\{\Im\left(\frac{\Phi^{(m)}(z_0)}{m!}(z-z_0)^m\right)=0\},
\end{equation*}
that passes through some saddle point; if this contour contains multiple saddle points, one can apply the method of steepest descent finitely many times by taking an appropriate partition of unity, and therefore we may assume that $\gamma$ contains a unique saddle point $z_0$.

Suppose $m$ is even. With a change of variable $z=z_0 + \rho e^{i\theta}$, $z \in \gamma$ if and only if
\begin{equation}\label{contour3}
    \sin\left(m\theta + arg( \Phi^{(m)}(z_0))\right)=0,
\end{equation}
where $arg(\Phi^{(m)}(z_0)) \in [0,2\pi)$. To define a convergent integral, let
\begin{equation*}
\theta_0 = \frac{\pi-arg(\Phi^{(m)}(z_0))}{m},\ \theta_1 = \theta_0 + \pi
\end{equation*}
so that 
\begin{equation}\label{contour4}
    \Re\left(\frac{\Phi^{(m)}(z_0)}{m!}(z-z_0)^m\right) = -\frac{|\Phi^{(m)}(z_0)|}{m!}\rho^m,
\end{equation}
for all $z=z_0 + \rho e^{i\theta_j},\ j=0,1$. Hence the desired contour with a convergent integrand is
\begin{equation}\label{cauchy}
\gamma = \{z =z_0+ \rho e^{i\theta_0}: 0<\rho<\infty\}\cup \{z = z_0 + \rho e^{i\theta_1}: \infty<\rho<0\}.    
\end{equation}
Since the integrand of $I(\lambda)$ is entire, $I(\lambda)$ may be evaluated on $\gamma$ instead of $(-\infty,\infty)$ by the Cauchy's Theorem. Then,
\begin{equation}\label{steep}
\begin{aligned}
I(\lambda) &\sim e^{\lambda \Phi(z_0)-\frac{z_0^2}{4}+i\theta_0} \int_{-\infty}^\infty e^{-\frac{\lambda|\Phi^{(m)}(z_0)|}{m!}\rho^m}d\rho,\ \lambda\rightarrow\infty,\\
&= e^{\lambda \Phi(z_0)-\frac{z_0^2}{4}+i\theta_0}\Big(\frac{|\lambda\Phi^{(m)}(z_0)|}{m!}\Big)^{-\frac{1}{m}}\int_{-\infty}^\infty e^{-\rho^m}d\rho.
\end{aligned}
\end{equation}

Alternatively suppose $m\geq 3$ is odd. To find an approximate constant-phase contour, $\theta$ satisfies \eqref{contour3}, \eqref{contour4}. Define
\begin{equation*}
    \theta_0 = \frac{\pi-arg(\Phi^{(m)}(z_0))}{m},\ \theta_1 = \frac{3\pi-arg(\Phi^{(m)}(z_0))}{m}.
\end{equation*}
Note from \eqref{cauchy} that $\gamma$ is not a straight line when $m$ is odd, and in fact $\theta_1 = \theta_0 + \frac{2\pi}{m}$. By the Cauchy's Theorem,
\begin{equation}\label{steep2}
\begin{aligned}
I(\lambda) &= \int_{\gamma_1}e^{\lambda\Phi(z)}e^{-\frac{z^2}{4}}dz+\int_{\gamma_2}e^{\lambda\Phi(z)}e^{-\frac{z^2}{4}}dz\\
&\sim e^{\lambda\Phi(z_0)-\frac{z_0^2}{4}+i\theta_0}(1-e^{i\frac{2\pi}{m}})\int_0^\infty e^{-\frac{\lambda |\Phi^{(m)}(z_0)|}{m!}\rho^m}d\rho\\
&\sim e^{\lambda\Phi(z_0)-\frac{z_0^2}{4}+i\theta_0}(1-e^{i\frac{2\pi}{m}})\left(\frac{\lambda |\Phi^{(m)}(z_0)|}{m!}\right)^{-\frac{1}{m}}\int_0^\infty e^{-\rho^m}d\rho. 
\end{aligned}
\end{equation}
It suffices to estimate the final terms of \eqref{steep}, \eqref{steep2}. We claim that
\begin{equation*}
    |e^{-\frac{z_0^2}{4}}|\leq e^{-\sigma \lambda^2},
\end{equation*}
for some $\sigma>0$. Other terms are shown to be $O(1)$ as $\lambda\rightarrow\infty$. Since
\begin{equation*}
    |\omega(\lambda^2\xi-\lambda z_0)| = |\omega(-re^{i\phi})|\leq \sum_{n=0}^N |a_n| r^n \leq \sum_{n=0}^N |a_n| c^n<\infty,
\end{equation*}
and
\begin{equation*}
\Re(i\lambda x z_0) = \Re(i\lambda^2 x \xi + ixre^{i\phi}) = \Re(ixre^{i\phi}) \leq (|x_0|+\delta)c,
\end{equation*}
there exists $M>0$ independent of $\lambda$ such that for all $\lambda>0$,
\begin{equation*}
    |e^{\lambda\Phi(z_0)}| \leq M.
\end{equation*}
Since $m \geq 2$, we have $\lambda \Phi^{(m)}(z_0) = -i(-\lambda)^m \omega^{(m)}(\lambda^2\xi-\lambda z_0)$, and therefore
\begin{equation*}
    |\lambda \Phi^{(m)}(z_0)|^{-\frac{1}{m}} = \lambda^{-1} |w^{(m)}(-re^{i\phi})|^{-\frac{1}{m}} \leq M,
\end{equation*}
where $M$ is independent of $\lambda \geq 1$. Note that $r,\phi$ depend only on $x,t,w$. Since $\Phi^{(m)}(z_0) \neq 0$, $|\omega^{(m)}(-re^{i\phi})|$ is uniformly bounded below by a positive constant for all $|x-x_0|<\delta$.

By direct computation,
\begin{equation*}
    \Re(z_0^2) = \lambda^2\xi^2 +2r \cos(\phi) \xi +\frac{r^2\cos (2\phi)}{\lambda^2}
\end{equation*}
is a quadratic polynomial in $\xi$ that obtains the global minimum $-\frac{r^2\sin^2\phi}{\lambda^2}$ at $\xi_m = -\frac{r\cos\phi}{\lambda^2}$. Hence there exist $\sigma,\lambda_0>0$ such that
\begin{equation*}
    \Re(z_0^2) \geq \sigma \lambda^2,
\end{equation*}
for all $\lambda > \lambda_0$ and $|\xi|\geq 1$.
\end{proof}

\section{Time-fractional propagation and dispersive estimates}\label{frac_trav}



This section is concerned with the dispersive properties of \eqref{stfsch} where it is assumed that $0<\alpha<1,\ \beta>0,\ \alpha \leq \gamma \leq 1$ unless specified. The fundamental solution is given by $K_t = \mathcal{F}^{-1}[E_\alpha (i^{-\gamma}t^\alpha|\xi|^\beta)]$ for $t \geq 0$. To ensure that $K_t$ is tempered, the bound on $\gamma$ is assumed as above, or more precisely, see \Cref{cor41}. By \Cref{fundamental_cpt}, the compactness of solution support is preserved in time for the first-order linear systems only for the transport equation when space and time scale linearly. For \eqref{stfsch}, ISP holds even for $\alpha = \beta < 1$ (when $\Delta x \sim \Delta t$) since $|\xi|$ is not analytic at the origin. When $\alpha = \beta = \gamma < 1$, however, it is shown that the solution operator is well-approximated by the half-wave operator as $t \rightarrow \infty$, and therefore, the mass of the solution, measured in the $L^2$ norm, is concentrated inside the light cone $|x| \lesssim |t|$ although the solution support loses compactness instantaneously.

Let us consider some special cases of \eqref{stfsch}. For $\alpha=\beta=\gamma=2$, \eqref{stfsch} reduces to the well-known linear wave equation whose exact solution is given by the d'Alembert's formula. For $\alpha = 1,\ \beta=2,\ \gamma = 2$, \eqref{stfsch} is the diffusion equation. For the space-time fractional Schr\"odinger equation, i.e., $\alpha = \gamma$, the explicit formula of the fundamental solution $K_{\alpha,\beta}^{(0)}(x,t)$ for \eqref{stfsch} is given in \cite[Chapter 13]{laskin2018fractional} by the Fox H-function as
\begin{equation}\label{H-function}    K_{\alpha,\beta}^{(0)}(x,t) = \frac{1}{|x|} H_{3,3}^{2,1}\left(-\frac{i^\alpha |x|^\beta}{t^\alpha}\left|
\begin{array}{c}
(1,1), (1,\alpha), (1,\frac{\beta}{2})\\
(1,\beta), (1,1), (1,\frac{\beta}{2})\\
\end{array}
\right.\right).   
\end{equation}
The $H$-function is a generalized Meijer $G$-function defined via the Mellin-Barnes integral; see Appendix A of \cite{laskin2018fractional} for an introduction to the topic. Instead of directly analyzing these special functions, our approach is to study the time evolution on the Fourier space using the Mittag-Leffler function. Let
\begin{equation*}
\widehat{K_t^{hw}}(\xi) = e^{-it|\xi|},\ U^{hw}(t)\phi:= K_t^{hw} \ast \phi.     
\end{equation*}
\begin{proposition}\label{ftw3}
For $\alpha = \beta = \gamma \in (0,1)$, the fundamental solution for \eqref{stfsch} is of the form
\begin{equation}\label{ftw4}
K_t = \frac{1}{\alpha}K_t^{hw} + R_{t,\alpha},\ t \geq 0,
\end{equation}
where $R_{t,\alpha} \in \mathcal{S}^{\prime}(\mathbb{R}^d)$, and for any $\phi \in \mathcal{E}^\prime(\mathbb{R}^d),\ \psi \in \mathcal{S}(\mathbb{R}^d)$, there exists $C(\alpha) = C(\phi,\psi,\alpha)>0$ such that for all $t>0$,
\begin{equation}\label{disp_est5}
    |\langle R_{t,\alpha} \ast \phi,\psi\rangle| \leq \frac{C(\alpha)}{t^{\alpha}},
\end{equation}
and
\begin{equation}\label{disp_est6}
    \| K_t \ast \phi \|_{L^\infty} \lesssim_\alpha |t|^{-\min(\frac{d-1}{2},\alpha)}\left(\| \phi \|_{\dot{B}^{\frac{d+1}{2}}_{1,1}} + \| \phi \|_{B^{d-\beta}_{1,1}}\right).
\end{equation}

Furthermore $E_{\alpha}(t) \xrightarrow[t \to \infty]{} 0$ strongly in $L^2(\mathbb{R}^d)$ but not uniformly where $E_\alpha(t)\phi := R_{t,\alpha} \ast \phi$.


\end{proposition}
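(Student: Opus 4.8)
The plan is to recognize $E_\alpha(t)$ as the Fourier multiplier operator with symbol
\[
m_t(\xi) := \widehat{R_{t,\alpha}}(\xi) = E_\alpha\!\left(i^{-\alpha}t^\alpha|\xi|^\alpha\right) - \tfrac{1}{\alpha}e^{-it|\xi|},
\]
furnished by the decomposition \eqref{ftw4}, and to deduce both halves of the claim from three properties of $m_t$. First I would record that $m_t$ is uniformly bounded in $(t,\xi)$: on the anti-Stokes ray $\arg z=-\tfrac{\pi\alpha}{2}$ one has $z^{1/\alpha}=-it|\xi|$, so the asymptotic expansion of the Mittag-Leffler function yields $E_\alpha(z)=\tfrac{1}{\alpha}e^{-it|\xi|}+O(|z|^{-1})$ for large $|z|=t^\alpha|\xi|^\alpha$, whence $|m_t(\xi)|\lesssim t^{-\alpha}|\xi|^{-\alpha}$ there, while the continuity of $E_\alpha$ controls the bounded-$|z|$ regime; together with $|e^{-it|\xi|}|=1$ this gives $\sup_{t\ge 0,\ \xi}|m_t(\xi)|=:C_0<\infty$. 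The same expansion shows that for each fixed $\xi\neq 0$, $m_t(\xi)=O(t^{-\alpha})\to 0$ as $t\to\infty$. Finally, evaluating at the origin exposes the key defect $m_t(0)=E_\alpha(0)-\tfrac{1}{\alpha}=1-\tfrac{1}{\alpha}=-\tfrac{1-\alpha}{\alpha}$, nonzero and independent of $t$.

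For the strong convergence, fix $\phi\in L^2(\mathbb{R}^d)$; since $m_t$ is bounded, $E_\alpha(t)$ extends from $\mathcal{E}^\prime$ to a bounded operator on $L^2$. By Plancherel,
\[
\|E_\alpha(t)\phi\|_{L^2}^2=(2\pi)^{-d}\int_{\mathbb{R}^d}|m_t(\xi)|^2|\widehat{\phi}(\xi)|^2\,d\xi .
\]
The integrand is dominated by $C_0^2|\widehat{\phi}(\xi)|^2\in L^1$ and, by the pointwise decay above, tends to $0$ for a.e.\ $\xi$ (every $\xi\neq 0$). Dominated convergence then forces $\|E_\alpha(t)\phi\|_{L^2}\to 0$, which is the asserted strong convergence.

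The failure of uniform (operator-norm) convergence is exactly the defect at the origin. A Fourier multiplier on $L^2$ has operator norm equal to the $L^\infty$ norm of its symbol, and $m_t$ is continuous at $\xi=0$, so
\[
\|E_\alpha(t)\|_{L^2\to L^2}=\|m_t\|_{L^\infty}\ge |m_t(0)|=\tfrac{1-\alpha}{\alpha}>0\quad\text{for all }t\ge 0,
\]
and the norm cannot tend to $0$. To exhibit this through functions, I would take any sequence $\phi_n$ with $\|\phi_n\|_{L^2}=1$ and $\widehat{\phi_n}$ supported in balls shrinking to $\xi=0$; these are unit vectors with $\|E_\alpha(t)\phi_n\|_{L^2}$ bounded below by a constant independent of $t$, directly witnessing the non-uniformity.

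The main obstacle I anticipate is not the two convergence arguments, which become routine once the three properties of $m_t$ are available, but establishing those properties \emph{uniformly} over the entire range of $(t,\xi)$. The expansion of $E_\alpha$ is being invoked precisely on the anti-Stokes line $\arg z=-\tfrac{\pi\alpha}{2}$, where the leading exponential neither grows nor decays; one must verify that this ray lies in a sector $|\arg z|\le\mu$ with $\tfrac{\pi\alpha}{2}<\mu<\pi\alpha$ on which the expansion retaining the exponential term is valid, and then patch the large-$|z|$ expansion to the small-$|z|$ Taylor regime so as to obtain the uniform bound $C_0$ and the $O(t^{-\alpha})$ pointwise decay without losing control near $\xi=0$ or as $t\downarrow 0$.
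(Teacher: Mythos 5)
Your proposal proves only the final sentence of the proposition, and for that part it is correct and essentially the paper's own argument: the paper likewise identifies $E_\alpha(t)$ with the multiplier $m_{t,\alpha}(\xi)=E_\alpha(i^{-\gamma}t^\alpha|\xi|^\alpha)-\frac{1}{\alpha}e^{-it|\xi|}$, rules out uniform convergence via $\|E_\alpha(t)\|_{L^2\to L^2}=\|m_{t,\alpha}\|_{L^\infty_\xi}=\frac{1-\alpha}{\alpha}$ (the supremum sitting at $\xi=0$), and proves strong convergence by splitting frequencies at a $t$-dependent radius $R_t\to 0$ and using the tail bound from \eqref{mlf_decay4}; your dominated-convergence shortcut is a harmless simplification of that quantitative splitting, and your concern about validity of the Mittag--Leffler expansion on the ray $\arg z=-\frac{\pi\alpha}{2}$ is exactly what \Cref{mlf_decay} (quoted from Podlubny) settles.

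The genuine gap is everything else: the proposition also asserts the decomposition \eqref{ftw4} with $R_{t,\alpha}\in\mathcal{S}'(\mathbb{R}^d)$, the pairing decay \eqref{disp_est5}, and the dispersive estimate \eqref{disp_est6}, and none of these follows from the three properties of $m_t$ you establish. For \eqref{disp_est5}, uniform boundedness plus pointwise decay of the symbol is not enough: on the region $\{|\xi|\lesssim t^{-1}\}$ the symbol does not decay in $t$ at all (you computed $m_t(0)=-\frac{1-\alpha}{\alpha}$ for every $t$), so a $t^{-\alpha}$ bound on $\langle R_{t,\alpha}\ast\phi,\psi\rangle$ requires an extra ingredient --- either exploiting the $O(t^{-d})$ measure of that region against the regularity of $\widehat{\phi}\,\widehat{\psi}$, or, as the paper does, passing to physical space: \Cref{fund_asymp} writes $R_{t,\alpha}$ as Riesz-potential terms $|x|^{-d}(|x|^\beta/t^\alpha)^k$ plus bounded profiles $W,W_1$, the $k=1$ term producing exactly the $t^{-\alpha}$ rate and \eqref{W}, \eqref{W1} controlling the remainder. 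For \eqref{disp_est6} the obstruction is structural: an $L^1$-type (Besov) to $L^\infty$ dispersive bound cannot be extracted from $L^2$ multiplier norms at all; the paper obtains it by combining the half-wave estimate \eqref{disp_est_hw} for the $\frac{1}{\alpha}K_t^{hw}$ piece with pointwise Littlewood--Paley bounds $\left|\bigl(|\cdot|^{-(d-\beta k)}\ast\phi\bigr)(x)\right|\lesssim\|\phi\|_{B^{d-\beta}_{1,1}}$ on the Riesz-potential pieces, which is where the two Besov norms and the exponent $\min(\frac{d-1}{2},\alpha)$ originate. Without \Cref{fund_asymp} (the physical-space asymptotics of the Mittag--Leffler function, adapted from Su et al.) or an equivalent substitute, the bulk of the proposition remains unproved.
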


\begin{remark}\label{half-wave}
For $\alpha = 1,\ \beta = 1,\ \gamma=1$, \eqref{stfsch} is the half-wave equation whose solutions decay in time as $t^{-\frac{d-1}{2}}$. More precisely, the dispersive estimate
\begin{equation}\label{disp_est_hw}
    \| K_t^{hw} \ast \phi \|_{L^\infty} \lesssim |t|^{-\frac{d-1}{2}} \| \phi \|_{\dot{B}^{\frac{d+1}{2}}_{1,1}},
\end{equation}
is satisfied in the Besov space. Hence for $d \geq 3$ and $\alpha = \beta = \gamma \in (0,1)$, the solution \eqref{ftw4} decays as $t^{-\alpha}$ by \eqref{disp_est5}, independent of the spatial dimension. This could be understood as a consequence of the long-memory effect due to the Caputo derivative. For $d=2$, the dominant time decay is $t^{-\frac{1}{2}}$ if $\alpha \in (\frac{1}{2},1]$, and $t^{-\alpha}$ if $\alpha \in (0,\frac{1}{2}]$. For $d=1$, no time decay is expected since an explicit computation via the inverse Fourier transform yields
\begin{equation}\label{half-wave2}
    K_t^{hw} = \frac{\delta(x-t)+\delta(x+t)}{2}+\frac{i}{2\pi}(\frac{1}{x-t}-\frac{1}{x+t}),
\end{equation}
where the convolution against $\frac{1}{x \pm t}$ is the time-shifted Hilbert transform.

\end{remark}

The time evolution is unique if $\phi$ does not grow too rapidly and $K_t \in \mathcal{S}^\prime(\mathbb{R}^d)$. That $K_t$ is tempered is determined by the asymptotic relations of the Mittag-Leffler function. 

\begin{lemma}[{\cite[Theorem 1.3, 1.4]{podlubny1999fractional}}]\label{mlf_decay}
Let $\beta >0,\ 0<\alpha\leq \gamma \leq 1$.

If $\alpha \leq \gamma < 2\alpha$, then for every $k \in \mathbb{N}$, we have
\begin{equation*}
E_\alpha(i^{-\gamma} |\xi|^\beta) = \frac{1}{\alpha} \exp (i^{-\frac{\gamma}{\alpha}} |\xi|^{\frac{\beta}{\alpha}}) - \sum_{j=1}^k \frac{i^{\gamma j}}{\Gamma(1-\alpha j)}|\xi|^{-\beta j} + O_k (|\xi|^{-\beta (1+k)}),\ |\xi| \rightarrow \infty.
\end{equation*}
If $2 \alpha \leq \gamma \leq 1$, then
\begin{equation*}
   E_\alpha(i^{-\gamma}|\xi|^\beta) = - \sum_{j=1}^k \frac{i^{\gamma j}}{\Gamma(1-\alpha j)}|\xi|^{-\beta j} + O_k (|\xi|^{-\beta (1+k)}),\ |\xi| \rightarrow \infty.
\end{equation*}
\end{lemma}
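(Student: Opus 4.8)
The plan is to deduce \Cref{mlf_decay} from the classical sectorial asymptotics of the one-parameter Mittag-Leffler function and to track into which sector the argument $z := i^{-\gamma}|\xi|^\beta$ falls as a function of $\alpha$ and $\gamma$. Since $i^{-\gamma} = e^{-i\pi\gamma/2}$ by the branch convention of \Cref{notation} and $|\xi|^\beta > 0$, we have $\arg z = -\frac{\pi\gamma}{2}$, hence $|\arg z| = \frac{\pi\gamma}{2}$ and $|z| = |\xi|^\beta \to \infty$. Because $0 < \gamma \leq 1$ we have $|\arg z| \leq \frac{\pi}{2} < \pi$, so $z$ never meets the negative-real branch cut and all fractional powers below are taken on the principal branch without ambiguity.

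First I would record the integral representation obtained by inserting the Hankel loop formula $\Gamma(w)^{-1} = \frac{1}{2\pi i}\int_{Ha} e^{s} s^{-w}\,ds$ into the defining series and summing the resulting geometric series: for $0 < \alpha < 2$,
\begin{equation*}
E_\alpha(z) = \frac{1}{2\pi i \alpha}\int_{C(\epsilon,\delta)} \frac{e^{\zeta^{1/\alpha}}}{\zeta - z}\,d\zeta,
\end{equation*}
where $C(\epsilon,\delta)$ is the contour made of the two rays $\arg\zeta = \pm\delta$, $|\zeta|\geq\epsilon$, closed by the arc $|\zeta|=\epsilon$, oriented counterclockwise, and $\frac{\pi\alpha}{2} < \delta \leq \min(\pi,\pi\alpha) = \pi\alpha$ (the last equality since $\alpha < 1$). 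The two regimes then come from whether the pole $\zeta = z$ lies inside or outside $C(\epsilon,\delta)$. If $|\arg z| < \delta$, deforming the contour past $\zeta = z$ picks up the residue $\frac{1}{\alpha}e^{z^{1/\alpha}}$; writing $\frac{1}{\zeta-z} = -\sum_{j=1}^k \zeta^{j-1}z^{-j} + \frac{\zeta^k}{z^k(\zeta-z)}$ and integrating term by term (re-using the Hankel formula with $w = 1-\alpha j$) produces $-\sum_{j=1}^k \frac{z^{-j}}{\Gamma(1-\alpha j)}$, with the tail bounded by $O_k(|z|^{-1-k})$. If instead $\delta \leq |\arg z| \leq \pi$, there is no residue and only the algebraic sum and error survive.

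It then remains to match $|\arg z| = \frac{\pi\gamma}{2}$ against $\delta$. If $\alpha \leq \gamma < 2\alpha$, then $\frac{\pi\alpha}{2}\leq \frac{\pi\gamma}{2} < \pi\alpha$, so one may fix $\delta$ with $\frac{\pi\gamma}{2} < \delta < \pi\alpha$ (still achievable, with $\delta > \frac{\pi\alpha}{2}$, in the borderline case $\gamma=\alpha$), placing $z$ inside the contour and yielding the expansion with the exponential head. If $2\alpha \leq \gamma \leq 1$, then $\frac{\pi\gamma}{2} \geq \pi\alpha > \delta$, so $z$ lies in the exterior sector and the exponential term is absent. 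In both cases I would substitute the branch identities $z^{1/\alpha} = e^{-i\pi\gamma/(2\alpha)}|\xi|^{\beta/\alpha} = i^{-\gamma/\alpha}|\xi|^{\beta/\alpha}$ and $z^{-j} = e^{i\pi\gamma j/2}|\xi|^{-\beta j} = i^{\gamma j}|\xi|^{-\beta j}$, together with $O_k(|z|^{-1-k}) = O_k(|\xi|^{-\beta(1+k)})$, to recover the two displayed formulas verbatim.

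The main obstacle is the uniformity of the remainder $O_k$ as $|\xi|\to\infty$, and in particular controlling its constant uniformly as $\gamma$ approaches the threshold $2\alpha$, i.e.\ as $|\arg z|$ approaches the sector boundary $\delta$; this is precisely where the contour must be chosen with a definite gap to the pole and the tail integral $\int_{C(\epsilon,\delta)} \frac{e^{\zeta^{1/\alpha}}\zeta^k}{z^k(\zeta-z)}\,d\zeta$ estimated by splitting into its arc and ray pieces. A useful consistency check at the boundary: when $\gamma = 2\alpha$ one has $z^{1/\alpha} = -|\xi|^{\beta/\alpha}$, so the would-be exponential head equals $e^{-|\xi|^{\beta/\alpha}}$, which decays faster than every power and is therefore legitimately absorbed into the algebraic regime, matching the placement of $\gamma = 2\alpha$ in the second case of \Cref{mlf_decay}.
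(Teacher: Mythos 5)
The paper offers no proof of \Cref{mlf_decay}: it is quoted directly from Podlubny (Theorems 1.3 and 1.4), so the only content left implicit is the sector bookkeeping, namely that $z = i^{-\gamma}|\xi|^\beta$ has $|\arg z| = \tfrac{\pi\gamma}{2}$, which lies inside an admissible sector $|\arg z| \leq \delta$ with $\tfrac{\pi\alpha}{2} < \delta < \pi\alpha$ exactly when $\alpha \leq \gamma < 2\alpha$ and outside it when $2\alpha \leq \gamma \leq 1$ --- and you carry this out correctly, including the borderline cases $\gamma = \alpha$ and $\gamma = 2\alpha$. Your Hankel-contour representation, geometric expansion of $\tfrac{1}{\zeta - z}$, and residue accounting are precisely the standard proof of the cited theorems, so the proposal is correct and takes essentially the same route as the paper's source.
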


\begin{corollary}\label{cor41}
If $\alpha \leq \gamma \leq 1$, then $K_t = \mathcal{F}^{-1}[E_\alpha (i^{-\gamma}t^\alpha|\xi|^\beta)] \in \mathcal{S}^\prime(\mathbb{R}^d)$ and
\begin{equation}\label{mlf_decay4}
\begin{split}
E_\alpha(i^{-\gamma} |\xi|^\beta) &= -\frac{i^\gamma}{\Gamma(1-\alpha)} |\xi|^{-\beta} + O(|\xi|^{-2\beta}),\ |\xi| \rightarrow \infty\ \text{if}\ \alpha < \gamma \leq 1,\\
E_\alpha(i^{-\gamma} |\xi|^\beta) &= \frac{1}{\alpha} \exp(-i |\xi|^{\frac{\beta}{\alpha}}) + O(|\xi|^{-\beta}),\ |\xi| \rightarrow \infty\ \text{if}\ \alpha = \gamma.
\end{split}    
\end{equation}
If $0 \leq \gamma < \alpha$, then $K_t \notin \mathcal{S}^\prime(\mathbb{R}^d)$.
\end{corollary}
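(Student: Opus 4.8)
The plan is to extract everything from the Mittag-Leffler asymptotics of \Cref{mlf_decay} together with one structural observation: the sign of the real part of the exponential rate $i^{-\gamma/\alpha}=e^{-i\pi\gamma/(2\alpha)}$, namely $\cos\!\big(\tfrac{\pi\gamma}{2\alpha}\big)$. First I would specialize \Cref{mlf_decay} with $k=1$. When $\alpha<\gamma<2\alpha$ the angle $\tfrac{\pi\gamma}{2\alpha}$ lies in $(\tfrac{\pi}{2},\pi)$, so $\cos\!\big(\tfrac{\pi\gamma}{2\alpha}\big)<0$ and the exponential term $\tfrac1\alpha\exp\!\big(i^{-\gamma/\alpha}|\xi|^{\beta/\alpha}\big)$ has modulus $\exp\!\big(-c|\xi|^{\beta/\alpha}\big)$, decaying faster than any power of $|\xi|$; it is absorbed into the $O(|\xi|^{-2\beta})$ remainder, leaving $-\tfrac{i^\gamma}{\Gamma(1-\alpha)}|\xi|^{-\beta}$. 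When $2\alpha\le\gamma\le1$ the second branch of \Cref{mlf_decay} carries no exponential and yields the same leading term, so the two ranges combine into the first line of \eqref{mlf_decay4}. The boundary case $\alpha=\gamma$ sits in the first branch with $i^{-\gamma/\alpha}=i^{-1}=-i$, so the exponential is purely oscillatory of modulus $\tfrac1\alpha$ while the algebraic corrections are $O(|\xi|^{-\beta})$, giving the second line. The prefactor $t^\alpha$ merely rescales $|\xi|^\beta$ and changes none of these qualitative statements for fixed $t>0$.

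For temperedness when $\alpha\le\gamma\le1$, I would note that $\xi\mapsto E_\alpha(i^{-\gamma}t^\alpha|\xi|^\beta)$ is continuous (as $E_\alpha$ is entire) and, by the expansions just derived, bounded as $|\xi|\to\infty$ — decaying like $|\xi|^{-\beta}$ when $\alpha<\gamma$, and of modulus $\tfrac1\alpha$ up to lower order when $\alpha=\gamma$. A continuous bounded function lies in $L^\infty(\mathbb{R}^d)\subseteq\mathcal{S}'(\mathbb{R}^d)$, and since $\mathcal{F}^{-1}$ is an isomorphism of $\mathcal{S}'$, we conclude $K_t=\mathcal{F}^{-1}[E_\alpha(i^{-\gamma}t^\alpha|\xi|^\beta)]\in\mathcal{S}'(\mathbb{R}^d)$.

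The reverse statement, $K_t\notin\mathcal{S}'$ for $0\le\gamma<\alpha$, is the crux. Here $|\arg(i^{-\gamma})|=\tfrac{\pi\gamma}{2}<\pi\alpha$, so the exponential branch of the Mittag-Leffler asymptotic (the first formula of \Cref{mlf_decay}, which in fact holds throughout the sector where the exponential dominates, since $\tfrac{\pi\gamma}{2}<\pi\alpha$) applies, and now $\tfrac{\pi\gamma}{2\alpha}<\tfrac{\pi}{2}$ forces $\cos\!\big(\tfrac{\pi\gamma}{2\alpha}\big)>0$. Thus the symbol satisfies $|E_\alpha(i^{-\gamma}t^\alpha|\xi|^\beta)|\gtrsim\exp\!\big(c\,t|\xi|^{\beta/\alpha}\big)$ for some $c>0$ and all large $|\xi|$, and I would deduce that this precludes temperedness; since $\mathcal{F}$ is bijective on $\mathcal{S}'$, $K_t\notin\mathcal{S}'$ follows. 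The main obstacle is precisely this last deduction: the phase $\sin\!\big(\tfrac{\pi\gamma}{2\alpha}\big)t|\xi|^{\beta/\alpha}$ oscillates at a rate comparable to the growth of the modulus, so a naive ``grows faster than a polynomial'' argument is invalid (for instance $e^\xi\cos(e^\xi)$ is tempered). To rule out cancellation I would pair the symbol against a bump localized near radius $R$ of width $\rho_R=\min\!\big(1,R^{1-\beta/\alpha}\big)$, chosen so the phase is essentially constant across its support; the pairing then has modulus $\gtrsim\rho_R^{\,d}\exp\!\big(cR^{\beta/\alpha}\big)$, whereas every Schwartz seminorm of this bump is bounded by a fixed power of $R$. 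Since exponential growth in $R$ defeats the polynomial seminorm estimate that any tempered distribution must obey, this yields the desired contradiction.
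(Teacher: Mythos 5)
Your proposal is correct, and its skeleton coincides with the paper's own proof: specialize \Cref{mlf_decay} at $k=1$, split according to the sign of $\cos\big(\tfrac{\pi\gamma}{2\alpha}\big)$ to get the two lines of \eqref{mlf_decay4}, deduce temperedness from continuity plus boundedness of the symbol, and deduce non-temperedness from the exponentially growing branch when $0\le\gamma<\alpha$. Where you genuinely go beyond the paper is the last step. The paper's proof ends with the sentence that the Mittag-Leffler function ``grows exponentially without sufficiently rapid oscillations to average out the growth,'' which is exactly the point your example $e^{\xi}\cos(e^{\xi})$ shows cannot be taken for granted; the paper offers no argument for it. Your bump-localization argument supplies the missing quantitative content: the phase $\sin\big(\tfrac{\pi\gamma}{2\alpha}\big)\,t|\xi|^{\beta/\alpha}$ has gradient of polynomial size $\sim R^{\beta/\alpha-1}$ near radius $R$, so a nonnegative bump of width $\lesssim \min\big(1,R^{1-\beta/\alpha}\big)$ (with a small constant depending on $t$) sees an essentially constant phase, the pairing is bounded below by $\rho_R^{\,d}\exp\big(c\,tR^{\beta/\alpha}\big)$ up to the polynomially bounded remainder, while all Schwartz seminorms of the bump grow only polynomially in $R$ --- contradicting the polynomial continuity estimate any tempered distribution satisfies. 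A second point in your favor: the paper applies \Cref{mlf_decay} for $0\le\gamma<\alpha$ even though that lemma is stated only for $\alpha\le\gamma\le1$; you correctly note that the exponential branch of the underlying asymptotic expansion is valid there because $\tfrac{\pi\gamma}{2}<\tfrac{\pi\alpha}{2}$ places $i^{-\gamma}|\xi|^\beta$ inside the sector where that expansion holds. In short, your route proves a little more than the paper writes down, at the cost of one extra localization argument; the paper's version is shorter but leaves the decisive non-cancellation step informal.
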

\begin{proof}
    \eqref{mlf_decay4} follows immediately from \Cref{mlf_decay}. That $K_t$ is tempered for $\alpha \leq \gamma \leq 1$ follows from \eqref{mlf_decay4} and    
\begin{equation*}
    |E_\alpha (i^{-\gamma}t^\alpha|\xi|^\beta)| \lesssim 1,\ \xi = O(1).
\end{equation*}
For $0 \leq \gamma < \alpha$, \Cref{mlf_decay} again implies
\begin{equation*}
    E_{\alpha}(i^{-\gamma} |\xi|^\beta) = \frac{1}{\alpha} \exp \left(|\xi|^{\frac{\beta}{\alpha}} e^{-i\frac{\pi \gamma}{2 \alpha}} \right) + O(|\xi|^{-\beta}),\ |\xi| \to \infty.
\end{equation*}

Since $\cos \frac{\pi \gamma}{2 \alpha}>0$, the Mittag-Leffler function grows exponentially without sufficiently rapid oscillations to average out the growth.
\end{proof}

For $\gamma = 1$, the analytic properties of $K_t$ were studied in \cite[Lemma 3.1]{su2021dispersive}. If $\alpha < \gamma \leq 1$, then $E_{\alpha}(i^{-\gamma}|\xi|^\beta)$ decays as $|\xi|^{-\beta}$ as $|\xi| \rightarrow \infty$, and therefore the statement (and the proof) given in \cite{su2021dispersive} applies to $K_t$. If $\alpha = \gamma$, then $E_{\alpha}(i^{-\gamma}|\xi|^\beta)$ behaves as $e^{-i|\xi|^{\frac{\beta}{\alpha}}}$ as $|\xi| \rightarrow \infty$. Hence a modification of \cite[Lemma 3.1]{su2021dispersive} yields
\begin{lemma}\label{fund_asymp}
Let $0 < \beta \leq d$. Define $C(\gamma) =$
$\begin{cases}
    \frac{1}{\alpha},\ \alpha = \gamma < 1,\\
    0,\ \alpha < \gamma \leq 1.
\end{cases}$ If $\frac{d}{\beta} \notin \mathbb{Z}$, then there exist $W \in L^\infty(\mathbb{R}^d)$ and non-zero constants $C_j,\ 1 \leq j \leq \lfloor \frac{d}{\beta} \rfloor$ such that
\begin{equation*}
    K_t(x) = C(\gamma) \mathcal{F}^{-1}[e^{-it|\xi|^{\frac{\beta}{\alpha}}}] + |x|^{-d}\sum_{k=1}^{\lfloor \frac{d}{\beta} \rfloor} C_k \left(\frac{|x|^\beta}{t^\alpha}\right)^{k} + t^{-\frac{d\alpha}{\beta}} W\left(\frac{x}{t^{\frac{\alpha}{\beta}}}\right).
\end{equation*}
If $\frac{d}{\beta} = m \in \mathbb{Z}$, then there exist $W \in L^\infty(\mathbb{R}^d)$, non-zero constants $C_j,\ 1 \leq j \leq m$, and $W_1 \in L^\infty_{|x|\geq 1}$ where $W_1(x) \sim \ln |x|$ as $|x| \rightarrow 0$ such that
\begin{equation*}
    K_t(x) = C(\gamma) \mathcal{F}^{-1}[e^{-it|\xi|^{\frac{\beta}{\alpha}}}] + |x|^{-d}\sum_{k=1}^{m-1} C_k \left(\frac{|x|^\beta}{t^\alpha}\right)^{k} + C_m t^{-m\alpha}W_1\left(\frac{x}{t^{\frac{\alpha}{\beta}}}\right) +t^{-\frac{d\alpha}{\beta}} W\left(\frac{x}{t^{\frac{\alpha}{\beta}}}\right).
\end{equation*}
\end{lemma}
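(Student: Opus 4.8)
The plan is to reduce the whole statement to a single self-similar profile and then read off the singular structure from the Fourier transform of homogeneous distributions. First I would record the scaling identity: substituting $\xi = t^{-\alpha/\beta}\eta$ in $K_t = \mathcal{F}^{-1}[E_\alpha(i^{-\gamma}t^\alpha|\xi|^\beta)]$ collapses the $t$-dependence of the symbol and yields $K_t(x) = t^{-d\alpha/\beta}G(x/t^{\alpha/\beta})$, where $G := \mathcal{F}^{-1}[E_\alpha(i^{-\gamma}|\cdot|^\beta)]$. A routine change of variables then shows this scaling sends a profile term $C(\gamma)\mathcal{F}^{-1}[e^{-i|\eta|^{\beta/\alpha}}]$ to $C(\gamma)\mathcal{F}^{-1}[e^{-it|\xi|^{\beta/\alpha}}]$, a term $|y|^{-(d-\beta k)}$ to $|x|^{-d}(|x|^\beta/t^\alpha)^k$, and an $L^\infty$ profile $W(y)$ to $t^{-d\alpha/\beta}W(x/t^{\alpha/\beta})$; in the integer case $d/\beta=m$ the exponent $d\alpha/\beta$ becomes $m\alpha$, matching the logarithmic term. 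Thus it suffices to establish the claimed decomposition for $G$.

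Next I would split the symbol with a radial high-frequency cutoff $\psi$ (with $\psi\equiv 0$ on $|\eta|\leq 1$ and $\psi\equiv 1$ on $|\eta|\geq 2$) and insert the Mittag-Leffler asymptotics of \Cref{mlf_decay}/\Cref{cor41} with $N=\lfloor d/\beta\rfloor$ terms (or $N=m$ in the integer case). This gives the identity
\[
E_\alpha(i^{-\gamma}|\eta|^\beta) = C(\gamma)e^{-i|\eta|^{\beta/\alpha}}\psi(\eta) + \psi(\eta)\sum_{j=1}^{N}c_j|\eta|^{-\beta j} + r_N(\eta) + (1-\psi(\eta))E_\alpha(i^{-\gamma}|\eta|^\beta),
\]
with $c_j = -i^{\gamma j}/\Gamma(1-\alpha j)$ and $r_N$ supported in $|\eta|\geq 1$ and of size $O(|\eta|^{-\beta(N+1)})$. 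When $\alpha<\gamma$, \Cref{cor41} guarantees there is no non-decaying oscillatory part, consistent with $C(\gamma)=0$: any exponential present (the regime $\gamma<2\alpha$) has $\Re(i^{-\gamma/\alpha})<0$, so it decays and is folded into $r_N$. The piece $(1-\psi)E_\alpha(i^{-\gamma}|\eta|^\beta)$ is bounded and compactly supported, and choosing $N$ as above makes $\beta(N+1)>d$, so $r_N\in L^1$.

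I would then invert term by term. The oscillatory term equals $C(\gamma)\mathcal{F}^{-1}[e^{-i|\eta|^{\beta/\alpha}}]$ up to $\mathcal{F}^{-1}[(\psi-1)e^{-i|\eta|^{\beta/\alpha}}]$, the transform of a bounded compactly supported function, hence bounded and continuous. The transforms of $r_N\in L^1$, of the compactly supported low-frequency piece, and of each cutoff correction are all in $C_0\subseteq L^\infty$, and are collected into $W$. The core computation is the power tail: for $0<\beta j<d$ the homogeneous-distribution identity $\mathcal{F}^{-1}[|\eta|^{-\beta j}] = c_{d,\beta j}\,|y|^{-(d-\beta j)}$, with $c_{d,\beta j}$ a ratio of Gamma factors that is finite and nonzero since $0<(d-\beta j)/2<d/2$, produces exactly the claimed singular powers, while the correction $\mathcal{F}^{-1}[(1-\psi)|\eta|^{-\beta j}]$ is again bounded. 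Setting $C_j=c_{d,\beta j}\,c_j$, these are nonzero whenever $\alpha j\notin\mathbb{N}$ (so that $1/\Gamma(1-\alpha j)\neq 0$); I would flag this proviso, as the listed terms are the surviving ones.

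The hard part, and the only genuinely delicate inversion, will be the borderline exponent $\beta j=d$, which occurs precisely in the integer case $d/\beta=m$ for $j=m$, where $|\eta|^{-d}$ is critical and the homogeneous-transform formula degenerates. Here I would compute $\mathcal{F}^{-1}[\psi(\eta)|\eta|^{-d}]$ directly in polar coordinates: writing the angular integral as the transform $\widehat\sigma(r y)$ of the sphere measure, the radial integral $\int r^{-1}\widehat\sigma(ry)\,dr$ behaves like $\int_1^{1/|y|} r^{-1}\,dr \sim -\ln|y|$ as $y\to 0$ (since $\widehat\sigma(0)=|S^{d-1}|\neq 0$) and stays bounded for $|y|\geq 1$. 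This yields $c_m\,\mathcal{F}^{-1}[\psi|\eta|^{-d}] = C_m W_1(y)$ with $W_1\sim\ln|y|$ near the origin and $W_1\in L^\infty_{|y|\geq 1}$. Reassembling the pieces for $G$ and applying the scaling identity of the first step gives the two stated formulas.
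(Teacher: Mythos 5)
Your proposal is correct and follows essentially the same route as the paper, whose ``proof'' of this lemma consists of invoking the Mittag-Leffler asymptotics of \Cref{mlf_decay} and \Cref{cor41} and declaring the result a modification of \cite[Lemma 3.1]{su2021dispersive}; your argument simply supplies the details that modification entails (the self-similar scaling reduction to $t=1$, the high/low frequency splitting of the symbol, the Riesz-potential inversion of the powers $|\eta|^{-\beta j}$, and the logarithmic borderline case $\beta j = d$). Your proviso that $C_j \neq 0$ only when $\alpha j \notin \mathbb{N}$ (since $1/\Gamma(1-\alpha j)$ vanishes otherwise) is a fair caveat on the statement as written, which the paper inherits from the cited lemma without comment.
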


\begin{proof}[Proof of \Cref{ftw3}]
For $\alpha = 1$, \eqref{stfsch} is the half-wave equation whose solution is given by \eqref{half-wave2}. Assume $\alpha < 1$.

Let $\phi \in \mathcal{E}^\prime(\mathbb{R}^d),\ \psi \in \mathcal{S}(\mathbb{R}^d)$. Define $R_{t,\alpha} = K_t - \frac{1}{\alpha}K_t^{hw}$ where the explicit forms of $K_t$ are given by \Cref{fund_asymp}. For $0 < \rho < d$, it follows from linear distribution theory that
\begin{equation*}
\langle |x|^{-\rho} \ast \phi, \psi \rangle = \langle \phi, |x|^{-\rho} \ast \psi \rangle \in \mathbb{C}    
\end{equation*}
since $\mathcal{S}^\prime \ast \mathcal{S}$ embeds into the space of slowly increasing smooth functions. Therefore the $k=1$ term of the finite sum of $K_t$ in \Cref{fund_asymp} yields the dominant decay rate $t^{-\alpha}$.

If $\frac{d}{\beta} = m \in \mathbb{Z}$, then $m\alpha = d$, and for $\epsilon>0$ sufficiently small and $\Tilde{\phi}(x) := \phi(-x)$,
\begin{equation}\label{W}
\begin{aligned}
\left|\left< t^{-d} W_1(\frac{x}{t} )\ast \phi, \psi\right>\right| &\leq \int_{|x|\leq t} \left| t^{-d} W_1(\frac{x}{t}) \left(\Tilde{\phi} \ast \psi\right) \right|dx + \int_{|x| > t} \left| t^{-d} W_1(\frac{x}{t}) \left(\Tilde{\phi} \ast \psi \right) \right|dx\\
&\lesssim \frac{1}{t^{d-\epsilon}} + \frac{\| W_1 \|_{L^\infty}\| \Tilde{\phi} \ast \psi\|_{L^1}}{t^d} \lesssim t^{-\alpha},
\end{aligned}
\end{equation}
since $W_1(x) \sim \ln |x|$ as $|x| \rightarrow 0$ and $\mathcal{E}^\prime \ast \mathcal{S}$ embeds into $\mathcal{S}$. Similarly,
\begin{equation}\label{W1}
\left|\left< t^{-d} W(\frac{x}{t} )\ast \phi, \psi\right>\right| \lesssim t^{-\alpha}.    
\end{equation}    

To show \eqref{disp_est6}, observe that for $\rho = d - \beta k$, we have
\begin{equation*}
    \left| (|\cdot|^{-\rho} \ast \phi)(x)\right| \leq \sum_{N \in 2^{\mathbb{Z}}} \int |\xi|^{-\beta k} |\widehat{P_N \phi}(\xi)| d\xi \lesssim_d \sum_{N \in 2^{\mathbb{Z}}} N^{d-\beta k} \|P_N \phi \|_{L^1} \lesssim \| \phi\|_{B^{d-\beta}_{1,1}}.
\end{equation*}

Hence the Riesz potential term of $K_t$ in \Cref{fund_asymp} is bounded above by $t^{-\alpha} \| \phi\|_{B^{d-\beta}_{1,1}}$. The remaining estimates regarding $W,W_1$ follow as \eqref{W}, \eqref{W1}.

Since $E_{\alpha}(t)$ is a convolution operator with the multiplier $m_{t,\alpha}(\xi) := E_\alpha(i^{-\gamma}t^\alpha |\xi|^\alpha) - \frac{1}{\alpha}e^{-it|\xi|}$, $E_\alpha(t)$ is bounded on $L^2(\mathbb{R}^d)$ and
\begin{equation*}
    \| E_{\alpha}(t)\|_{L^2_x \rightarrow L^2_x} = \| m_{t,\alpha} \|_{L^\infty_{\xi}} = \frac{1-\alpha}{\alpha}\nrightarrow 0\ \text{as}\ t \to \infty,
\end{equation*}
where it can be shown by direct computation that $|m_{t,\alpha}(\xi)|$ is a continuous function in $\xi$ with the global maximum at $\xi = 0$ that monotonically decays as $|\xi|$ increases. 

To show strong convergence, let $\phi \in L^2(\mathbb{R}^d) \setminus \{0\}$. For $|t| \geq 1,\ 0 < \epsilon < \| \phi\|_{L^2}$, define
\begin{equation*}
R_t \simeq \frac{\| \phi \|_{L^2}^{1/\alpha}}{|t| \epsilon^{1/\alpha}}.    
\end{equation*}
 Then
 \begin{equation*}
     \| E_{\alpha}(t)\phi\|_{L^2}^2 = \int_{|\xi|\leq R_t} |m_{t,\alpha}(\xi)|^2 |\widehat{\phi}(\xi)|^2 d\xi+ \int_{|\xi| > R_t} |m_{t,\alpha}(\xi)|^2 |\widehat{\phi}(\xi)|^2 d\xi := I + II.
 \end{equation*}
By \eqref{mlf_decay4},

\begin{equation*}
    II \lesssim \left(\frac{\| \phi \|_{L^2}}{|t R_t|^\alpha}\right)^2 \lesssim \epsilon^2,
\end{equation*}
and
\begin{equation*}
    I \leq \left(\frac{1-\alpha}{\alpha}\right)^2 \cdot \|\widehat{\phi} \|_{L^2_{|\xi| \leq R_t}}^2 \leq \epsilon^2,
\end{equation*}
for $t>0$ sufficiently large.
\end{proof}

Sharp frequency-localized dispersive estimates reveal an interesting role of the fractional time derivative. An analytic result is given in \Cref{disp_est} and a numerical evidence, in \Cref{fig}.

\begin{proposition}\label{disp_est}
Let $N \in 2^\mathbb{Z}$. For $\alpha = \gamma \in (0,1)$ and $\alpha \neq \beta$,
\begin{equation}\label{disp_est2}
    \| P_N K_t \ast f\|_{L^\infty(\mathbb{R}^d)} \lesssim N^d \left(\frac{1}{1+t^\alpha N^\beta} + \frac{1}{1+t^\frac{d}{2} N^\frac{d\beta}{2\alpha}}\right) \| f \|_{L^1}.
\end{equation}
For $\alpha < \gamma \leq 1$,
\begin{equation}\label{disp_est3}
    \| P_N K_t \ast f\|_{L^\infty(\mathbb{R}^d)} \lesssim  \frac{N^d}{1+t^\alpha N^\beta} \| f \|_{L^1}.
\end{equation}
\end{proposition}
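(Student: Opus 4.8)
The plan is to reduce the $L^1\to L^\infty$ bound to a uniform pointwise estimate on $P_N K_t$ and then read off the two decay mechanisms—a non-oscillatory Riesz-potential contribution and an oscillatory half-wave-type contribution—directly from the Mittag-Leffler asymptotics of \Cref{cor41}. By Young's convolution inequality, $\|P_N K_t \ast f\|_{L^\infty}\leq \|P_N K_t\|_{L^\infty}\|f\|_{L^1}$, so it suffices to bound $\|P_N K_t\|_{L^\infty}$ by the kernels appearing in \eqref{disp_est2}--\eqref{disp_est3}. Writing $P_N K_t(x)=\frac{1}{(2\pi)^d}\int e^{ix\cdot\xi}\zeta(\xi/N)E_\alpha(i^{-\gamma}t^\alpha|\xi|^\beta)\,d\xi$ and rescaling $\xi=N\eta$ gives
\[
P_N K_t(x)=\frac{N^d}{(2\pi)^d}\int e^{iNx\cdot\eta}\,\zeta(\eta)\,E_\alpha\!\bigl(i^{-\gamma}s|\eta|^\beta\bigr)\,d\eta,\qquad s:=t^\alpha N^\beta,
\]
so a single scalar parameter $s$ governs the behavior, with $|\eta|\in(\tfrac12,2)$ on $\operatorname{supp}\zeta$. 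When $s\lesssim 1$ the multiplier is uniformly bounded by continuity of $E_\alpha$, whence $|P_N K_t|\lesssim N^d$; this matches each summand of the stated bounds since both ``$1+$'' denominators are then $\asymp 1$. Thus only the regime $s\gtrsim 1$ requires work.

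For $\alpha<\gamma$, \Cref{cor41} yields $|E_\alpha(i^{-\gamma}s|\eta|^\beta)|\lesssim (s|\eta|^\beta)^{-1}\asymp s^{-1}$ uniformly over $\operatorname{supp}\zeta$, so bounding the integral by the $L^1_\eta$-norm of its amplitude gives $|P_N K_t|\lesssim N^d s^{-1}=N^{d-\beta}t^{-\alpha}$, i.e.\ $\lesssim N^d/(1+t^\alpha N^\beta)$, which is \eqref{disp_est3}. No oscillation is needed here, consistent with $C(\gamma)=0$ in \Cref{fund_asymp}.

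For $\alpha=\gamma$, \Cref{cor41} splits the multiplier as $E_\alpha(i^{-\gamma}s|\eta|^\beta)=\tfrac{1}{\alpha}\exp(-i\,tN^{\beta/\alpha}|\eta|^{\beta/\alpha})+O(s^{-1})$, using $(s|\eta|^\beta)^{1/\alpha}=tN^{\beta/\alpha}|\eta|^{\beta/\alpha}$. The $O(s^{-1})$ remainder is handled exactly as in the previous paragraph and produces the first summand $N^d/(1+t^\alpha N^\beta)$. The principal term is $\tfrac{1}{\alpha}$ times the frequency-localized propagator $P_N\mathcal{F}^{-1}[e^{-it|\xi|^{\beta/\alpha}}](x)$, an oscillatory integral whose phase is $\mu\,\psi_y(\eta)$ with $\psi_y(\eta)=y\cdot\eta-|\eta|^{\beta/\alpha}$, $\mu:=tN^{\beta/\alpha}$, and $y:=Nx/\mu$. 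Because $\zeta$ is supported away from the origin, $\eta\mapsto|\eta|^{\beta/\alpha}$ is smooth on $\operatorname{supp}\zeta$, and its Hessian has eigenvalues $\tfrac{\beta}{\alpha}\bigl(\tfrac{\beta}{\alpha}-1\bigr)|\eta|^{\beta/\alpha-2}$ (radial) and $\tfrac{\beta}{\alpha}|\eta|^{\beta/\alpha-2}$ (tangential, multiplicity $d-1$); the hypothesis $\alpha\neq\beta$ forces $\beta/\alpha\neq 1$, so this Hessian is non-degenerate with determinant bounded away from $0$ uniformly on $\operatorname{supp}\zeta$. The standard stationary-phase estimate then gives $\bigl|\int e^{i\mu\psi_y}\zeta\,d\eta\bigr|\lesssim \mu^{-d/2}$ uniformly in $y$, hence $|P_N K_t|\lesssim N^d\mu^{-d/2}=N^d t^{-d/2}N^{-d\beta/(2\alpha)}$, the second summand of \eqref{disp_est2}. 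Adding the two contributions yields \eqref{disp_est2}.

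The main obstacle is the uniform-in-$x$ (equivalently uniform-in-$y$) stationary-phase bound for the oscillatory term: one must treat together the resonant set of $y$ for which $\nabla\psi_y$ vanishes—where a genuine non-degenerate critical point delivers the sharp rate $\mu^{-d/2}$—and its complement, where $\nabla\psi_y$ is bounded below and repeated integration by parts (non-stationary phase) gives faster decay, all with constants independent of $y$. The decisive input making the critical point non-degenerate, and thus producing $\mu^{-d/2}$ rather than the slower half-wave rate $\mu^{-(d-1)/2}$, is precisely the non-vanishing curvature of the dispersion surface $\{|\eta|^{\beta/\alpha}=\mathrm{const}\}$; this is exactly where $\alpha\neq\beta$ enters and what distinguishes the present case from the borderline half-wave regime of \Cref{ftw3}.
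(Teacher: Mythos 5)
Your proposal is correct and takes essentially the same route as the paper: after Young's inequality, the same rescaling (you scale by $N$, the paper equivalently by $t^{\alpha/\beta}$), the same splitting of the Mittag-Leffler multiplier via \Cref{cor41} into a half-wave-type oscillatory term plus an $O(|\xi|^{-\beta})$ remainder, and the same two summands emerging from the same two mechanisms. The only difference is that where the paper simply cites \cite[Proposition 1]{cho2011remarks} for the uniform stationary-phase bound $\mu^{-d/2}$ (valid precisely because $\beta/\alpha\neq 1$ makes the Hessian non-degenerate) and \cite[Theorem 1.2]{su2021dispersive} for the remainder, you prove these frequency-localized kernel estimates directly; the substance is identical.
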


\begin{remark}\label{sharp}
    The frequency-localized dispersive estimates given by \eqref{disp_est2}, \eqref{disp_est3} are sharp by combining \cite[Proposition 1.3]{su2021dispersive} and \cite[Proposition 2]{cho2011remarks}. More precisely, there exist $N_0 \in 2^{\mathbb{Z}}$ and $t_0>0$ such that for all $N > N_0,\ t > t_0$, we have    
\begin{equation*}
    \| P_N K_t\|_{L^\infty(\mathbb{R}^d)} \gtrsim N^d \left(\frac{1}{1+t^\alpha N^\beta} + \frac{1}{1+t^\frac{d}{2} N^\frac{d\beta}{2\alpha}}\right),
\end{equation*}
which shows the sharpness of \eqref{disp_est2}, and similarly for \eqref{disp_est3}.

For $d \geq 2$, the RHS of \eqref{disp_est2} decays as $t^{-\alpha}$, independent of the spatial dimension. For $d=1$, the dominant time decay is $t^{-\frac{1}{2}}$ if $\alpha \in (\frac{1}{2},1)$, and $t^{-\alpha}$ if $\alpha \in (0,\frac{1}{2}]$. Numerical evidence of the last observation is given in \Cref{fig}.
\end{remark}

\begin{figure}[htbp]
\centering
\includegraphics[width = 0.78\textwidth]{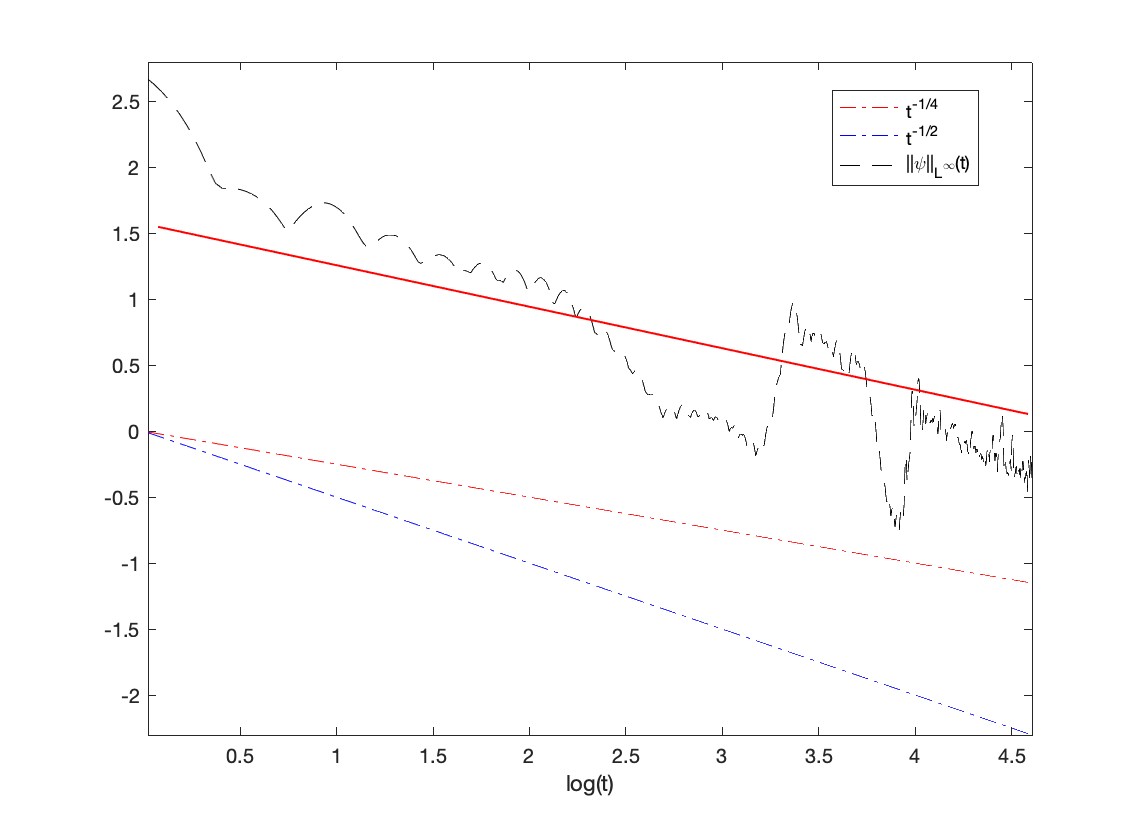}
\caption{Log-log plot of $\|\psi(t)\|_{L^\infty} = \||u(t)|^2\|_{L^\infty}$ where $u(t)$ is the solution to \eqref{stfsch} with $\alpha = \gamma =\frac{1}{4},\ \beta = \frac{1}{2}$. The initial datum is $u(0) = \frac{\sin(2x)-\sin(x)}{x}$ whose Fourier transform is a non-zero constant around an annulus in the Fourier space, and zero otherwise. The solution was computed in Matlab using the fundamental solution provided in \Cref{cor41} by utilizing the Mittag-Leffler file provided by Podlubny (Igor Podlubny (2023). Mittag-Leffler function, https://www.mathworks.com/matlabcentral/fileexchange/8738-mittag-leffler-function). For comparison, plots of $t^{-1/2}, t^{-1/4}$, and $\| \psi(t) \|_{L^\infty}$ are given where the solid line is provided for the reader's convenience in observing the average decay of $\psi$. The jumps in the decay are caused by constructive and destructive resonances, but note that the decay of $\| \psi(t) \|_{L^\infty}$ is consistent with the theoretical rate of $t^{-1/4}$ given in  \Cref{disp_est}. }\label{fig}
\end{figure}

\begin{proof}[Proof of \Cref{disp_est}]

Let $\alpha = \gamma \in (0,1)$ and $\alpha \neq \beta$.
\begin{equation*}
P_N K_t (x) = \int E_\alpha(i^{-\gamma} t^\alpha |\xi|^\beta) \zeta\left(\frac{|\xi|}{N}\right)e^{i x \cdot \xi} d\xi
=t^{-d\frac{\alpha}{\beta}} \int E_\alpha(i^{-\gamma}|\xi|^\beta) \zeta\left(\frac{|\xi|}{N_1}\right)d\xi,
\end{equation*}
where $N_1 = t^{\frac{\alpha}{\beta}}N$. By \eqref{mlf_decay4},
\begin{equation*}
    \int E_{\alpha}(i^{-\gamma} |\xi|^\beta) \zeta\left(\frac{|\xi|}{N_1}\right)d\xi = \frac{1}{\alpha} \int e^{-i|\xi|^{\frac{\beta}{\alpha}}}\zeta\left(\frac{|\xi|}{N_1}\right)d\xi + \int R(\xi)\zeta\left(\frac{|\xi|}{N_1}\right)d\xi,
\end{equation*}
where $R = O(|\xi|^{-\beta})$ as $|\xi|\rightarrow \infty$. By \cite[Proposition 1]{cho2011remarks} and \cite[Theorem 1.2]{su2021dispersive}, respectively,
\begin{equation*}
\begin{aligned}
\left|\int e^{-i|\xi|^{\frac{\beta}{\alpha}}}\zeta\left(\frac{|\xi|}{N_1}\right)d\xi\right| &\lesssim \frac{N_1^d}{1+N_1^{\frac{d\beta}{2\alpha}}}\\
\left|\int R(\xi)\zeta\left(\frac{|\xi|}{N_1}\right)d\xi\right| &\lesssim \frac{N_1^d}{1+N_1^\beta}.
\end{aligned}    
\end{equation*}

Altogether we have
\begin{equation*}
\| P_N K_t\|_{L^\infty(\mathbb{R}^d)} \lesssim N^d \left(\frac{1}{1+t^\alpha N^\beta} + \frac{1}{1+t^\frac{d}{2} N^\frac{d\beta}{2\alpha}}\right),    
\end{equation*}
and hence \eqref{disp_est2} by the Young's inequality. The estimate \eqref{disp_est3} follows similarly.
\end{proof}

\section{Conclusion}

In this paper, we established infinite speed of propagation (ISP) for a broad class of nonlinear dispersive equations, showing that compactly supported solutions cannot persist over time under minimal regularity assumptions on the dispersion relation. Using complex-analytic tools, including the Paley–Wiener–Schwartz theorem and the FBI transform, we extended known results beyond polynomial dispersion relations to fractional and more general settings. We also highlighted a key limitation of Bourgain’s complex-analytic approach, which requires cubic growth of the dispersion relation (\Cref{rmk:3.1}) and thus excludes cases like the nonlinear Schrödinger equation. For fractional dispersive systems, we demonstrated how dispersive decay rates are shaped by the interplay between memory effects and spatial scaling, revealing new qualitative phenomena. These results contribute to the broader theory of dispersive and nonlocal PDEs, advancing our understanding of support propagation, unique continuation, and analytic regularity. Future work may address ISP in nonlinear and stochastic models and explore the influence of boundary conditions and external forcing.

\section{Declarations.}
\subsection{Funding and/or Conflicts of interests/Competing interests}

Both authors are supported by NSF RTG grant DMS-1840260. There are no conflicting interests.



\bibliographystyle{abbrv}
\bibliography{ref}
\end{document}